\newtheorem{lemma}{Lemma}[section]
\newtheorem{theorem}{Theorem}[section]
\DeclareMathOperator{\SL}{\mathrm{SL}}
\begin{document}
\date{}

\title{{The Hermite ring conjecture and special linear groups for valuation rings}
{\thanks{This research is supported by the National Science Foundation of China(11501192, 11471108) and Scientific Research Fund of Hunan province education Department(2017JJ3084).
$+$Corresponding author: Dongmei Li, dmli@hnust.edu.cn}}
{\author{\small  Jinwang Liu\quad Dongmei Li$^{+}$\quad  Licui Zheng\\
\small \, \, School of Mathematics and Computing Sciences, Hunan
University\\  \small of   Science and Technology,
 Xiangtan, Hunan, China, 411201}}}

\maketitle
{\bf ABSTRACT}
 \quad In this paper we prove that the Hermite ring conjecture holds for valuation rings $V$, and the special liner group $\SL_n(V[x])$ coincides with the group generated by elementary matrices $E_n(V[x])$ for $n\geq3$. For any arithmetical ring $R$ and $n\geq3$, we show $\SL_n(R[x]) = \SL_n(R)\cdot E_n(R[x])$.

 \vspace{0.2cm}
\noindent \footnotesize{\bf KEYWORDS.} Hermit ring Conjecture; Suslin's stability theorem; special linear groups, valuation rings; arithmetical rings.

\section{Introduction}
\normalsize
Serre's conjecture, proposed by J. P. Serre in 1955, asserts that finitely generated projective modules of polynomial rings with finitely many variables over a field, are actually free. In 1976, Quillen \cite{Quil} and Suslin \cite{Sus76} independently gave an affirmative answer to this famous conjecture via completely different approaches. Note that Serre's conjecture is equivalent to saying that any unimodular row $\omega$ in $K^{1\times n}[x_1,\cdots,x_m]$ can be completed to an invertible square matrix, where $K$ is a field. In \cite{Sus77} Suslin established the following $K_1$-analogue of Serre's conjecture (Suslin's stability Theorem): If $R$ is a commutative Noetherian ring and $n\geq \max(3, \dim(R)+2)$ then $\SL_n(R[x_1,\cdots,x_m]) = E_n(R[x_1,\cdots,x_m])$.

Recall that a ring $R$ is called a Hermite ring if every finite generated stably free module is free. It is fairly easy to see that $R$ is Hermite if any unimodular row $\omega \in R^{1\times n}$ can be completed to an invertible square matrix over $R$. In \cite{Lam06,Lam78}, Lam presented the following Hermite ring conjecture: If $R$ is a Hermite ring, then $R[x]$ is Hermite as well. This conjecture seems rather untractable since  there does not seem to exist much evidence for its truth, and so far it has only been verified for some special cases. For instances,  Quillen-Suslin Theorem implies that the Hermite ring conjecture is true for $K[x_1,x_2,\cdots,x_n]$, where $K$ is a field, and Yengui \cite{Yen1} proved that it also holds for rings of Krull dimension $\leq1$.

The following conjecture about $K_1$-analogue question has been proposed in \cite{Yen1}.

\vspace{0.2cm}
{\bf Conjecture 1 }\quad Suppose that $R$ is a ring of Krull dimension $\leq 1$, and $n\geq3$. Then every matrix $\sigma(x) \in \SL_n(R[x])$ is congruent to $\sigma(0)$ modulo $E_n(R[x])$. That is, for every $\sigma(x)\in \SL_n(R[x])$, there is $E \in E_n(R[x])$ such that $\sigma(x) \cdot E = \sigma(0)$.
\vspace{0.2cm}

In fact, by virtue of local-global principle for elementary matrices [5], Conjecture 1 is equivalent to the following Conjecture 2.
\vspace{0.2cm}

{\bf Conjecture 2 }\quad Suppose $R$ is a local ring of Krull dimension $\leq 1$, and
$$M=\left(
      \begin{array}{ccc}
        p & q & 0 \\
        r & s & 0 \\
        0 & 0 & 1 \\
      \end{array}
    \right)\in SL_3(R[x])
$$
Then $M\in E_3(R[x])$.
\vspace{0.2cm}

Liu [7] has proved that Conjectures 1 and 2 are true for valuation rings of Krull dimension $\leq1$. Since it is well-known that a valuation ring is Hermite, it is natural for us to ask the following three question:

(1) Is the Hermite ring conjecture true for valuation rings?

(2) Are conjectures 1 and 2 true for any valuation ring $V$? Furthermore, do we have $\SL_n(V[x]) = E_n(V[x])$ for any $n \geq 3$?

(3) Is conjecture 1 true for arithmetical rings?

In this paper, we shall surround the Hermite ring conjecture, investigate and solve the three problems above.

\section{Notations}

All rings considered in this paper are unitary and commutative. The undefined terminology is standard as in \cite{Lam06}. Recall that a commutative ring $V$ is said to be a \emph{valuation ring} if principal ideals of $V$ are totally ordered by inclusion; that is, any two elements of $V$ are comparable under division. Obviously, a valuation ring is a local ring. A commutative ring $R$ is said to be \emph{arithmetical} if the localization $R_m$ of $R$ at $m$ is a valuation ring for every maximal ideal $m$ of $R$. For any commutative ring $R$ and $n \ge 3$,  $Um_n(R)$ denotes the set of length $n$ unimodular rows over $R$. Let $\SL_n(R)$ denote the special linear group, and $E_n(R)$ denotes the subgroup of $\SL_n(R)$ generated by elementary matrices of the form $I_n+a{\bf e}_{ij}$, where $I_n$ is the identity matrix, $a \in R$, $i \ne j$, and ${\bf e}_{ij}$ is the matrix whose only nonzero entry is the $1_R$ at position $(i, j)$. We know that $E_n(R)$ is a normal subgroup of $\SL_n(R)$.

\section{The Hermite ring conjecture for valuation rings}

In this section, let $R$ be an unitary and commutative ring, and $V$ be a valuation ring. For $f(x)\in V[x]$, we say that $f(x)$ is primitive if at least one coefficient in $f(x)$ is a unit.

We first introduce several necessary lemmas.

\begin{lemma} $([4])$
Let $G=E_n(R)$, and $\gamma=(c_1,c_2,\cdots,c_n)\in Um_n(R)$, $n\geq3$, $b_1,b_2\in R$, such that $c_1b_1+c_2b_2$ is a unit modulo $c_3R+\cdots+c_nR$. Then $$(c_1,c_2,c_3,\cdots,c_n)\sim_G(b_1,b_2,c_3,\cdots,c_n)$$
\end{lemma}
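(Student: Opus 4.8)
The plan is to produce the elementary equivalence as an explicit finite product of elementary transvections $c_j\mapsto c_j+\lambda c_i$ (each of which is right multiplication by a generator of $G=E_n(R)$), using that $n\geq3$ leaves a spare coordinate to serve as workspace. By hypothesis fix $t_3,\dots,t_n\in R$ and a unit $u\in R$ with $c_1b_1+c_2b_2+t_3c_3+\cdots+t_nc_n=u$; since $\gamma$ is unimodular, fix also $a_1,\dots,a_n$ with $a_1c_1+\cdots+a_nc_n=1$. Put $d:=c_1b_1+c_2b_2$ and $I:=c_3R+\cdots+c_nR$; the first relation says $d$ is a unit modulo $I$, and the second says $\gamma$ is still unimodular modulo $I$, so both $(c_1,c_2)$ and $(b_1,b_2)$ are unimodular pairs in $R/I$.

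I would first record the cheap moves: for $i\neq j$ the operation $c_j\mapsto c_j+\lambda c_i$ lets one replace a coordinate by anything congruent to it modulo the ideal generated by the coordinates one is willing to touch; thus $(c_1,c_2,c_3,\dots,c_n)\sim_G(c_1',c_2,c_3,\dots,c_n)$ whenever $c_1'\equiv c_1\bmod(c_2R+I)$, symmetrically in the second slot, and the third coordinate may be changed modulo $c_1R+c_2R+c_4R+\cdots+c_nR$. No single such move suffices, since $(b_1,b_2)$ need not be congruent to $(c_1,c_2)$ in any convenient way; the point of $n\geq3$ is that auxiliary data can be parked in the third slot while $(c_1,c_2)$ is being reworked. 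The substantive step is a mixing move driven by the dual relation $pc_1b_1+pc_2b_2+q_3c_3+\cdots+q_nc_n=1$ obtained from the above (take $p=u^{-1}$, $q_i=u^{-1}t_i$). The matrix identity
$$\begin{pmatrix} b_1 & b_2\\ -c_2 & c_1\end{pmatrix}\begin{pmatrix} c_1\\ c_2\end{pmatrix}=\begin{pmatrix} d\\ 0\end{pmatrix}$$
exhibits $(c_1,c_2)$ and $(b_1,b_2)$ as differing by a $2\times2$ matrix that becomes invertible once $d$ is inverted; the division by $d$ is then carried out by elementary operations of $G$ that feed $c_1,c_2$ into $c_3,\dots,c_n$ and back, using the dual relation. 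Finally one undoes the changes made to $c_3,\dots,c_n$, restoring the last $n-2$ entries.

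The main obstacle is this last synchronization. Altering a workspace coordinate uses the current values of the first two coordinates, and these drift from $(c_1,c_2)$ to $(b_1,b_2)$ in the middle of the argument, so one cannot naively "modify $c_3$, operate, undo": the operations must be ordered so that what is added back to $c_3,\dots,c_n$ at the end is expressible through $b_1,b_2$ and $c_3,\dots,c_n$, which is possible exactly because $d=c_1b_1+c_2b_2$ is symmetric in the two pairs and is a unit modulo $I$. A secondary bookkeeping nuisance is that the natural identities produce the unit $u$ rather than $1$; this is harmless because $u^{-1}$ only multiplies scalars occurring inside elementary operations, never an entry of the row, so no orbit is affected.
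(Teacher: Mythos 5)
The paper does not prove this lemma at all: it is quoted from Lam's book [4], so the only question is whether your sketch amounts to a proof, and it does not. Everything up to your ``substantive step'' is fine: the cheap moves, the setup, the identity $\bigl(\begin{smallmatrix} b_1 & b_2\\ -c_2 & c_1\end{smallmatrix}\bigr)\bigl(\begin{smallmatrix}c_1\\c_2\end{smallmatrix}\bigr)=\bigl(\begin{smallmatrix}d\\0\end{smallmatrix}\bigr)$, and the genuinely useful remark that $d=c_1b_1+c_2b_2$ can be added to or stripped from a workspace coordinate from either end because it is symmetric in the two pairs (this does give the endgame: from $(b_1,b_2,c_3+td,c_4,\dots,c_n)$ one recovers $(b_1,b_2,c_3,\dots,c_n)$). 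But the heart of the lemma is exactly the step you describe as ``the division by $d$ is then carried out by elementary operations that feed $c_1,c_2$ into $c_3,\dots,c_n$ and back'', and you never exhibit those operations; you then name the synchronization problem yourself and dismiss it with ``which is possible exactly because $d$ is symmetric \dots and is a unit modulo $I$''. That is a restatement of the lemma, not an argument. The natural implementations of your plan fail: from either end the first two slots only let you alter the workspace entries by multiples of $d$, and in general no such alteration makes the new workspace ideal contain $1$ (for instance one cannot make $c_3+td$ a unit modulo $(c_4,\dots,c_n)$; already for a unimodular pair like $(x,y)$ in $\mathbb{R}[x,y]/(x^2+y^2-1)$ no combination $x+ty$ is a unit), and modulo whatever ideal you do reach, two unimodular pairs with $c_1b_1+c_2b_2$ a unit need not be $E_2$-equivalent --- indeed the statement is false for $n=2$, so a correct proof must use the spare coordinate in a more specific way than ``parking''. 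The known proof (Lam [4], going back to Suslin--Vaserstein) consists precisely of an explicit, somewhat delicate chain of elementary operations, and that chain is what is missing here.

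A smaller slip: ``$c_1b_1+c_2b_2$ is a unit modulo $c_3R+\cdots+c_nR$'' gives you $e\in R$ with $ed+t_3c_3+\cdots+t_nc_n=1$; it does not give a unit $u\in R$ with $d+t_3c_3+\cdots+t_nc_n=u$, since units of $R/I$ need not lift to units of $R$ (in $\mathbb{Z}$, $2$ is a unit mod $5$ but is not congruent mod $5$ to $\pm1$). The ``dual relation'' you actually use survives with $p=e$, so this is repairable, but your closing remark about cancelling $u^{-1}$ rests on this unjustified lift and should be reformulated accordingly.
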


\begin{lemma}
Let $G=E_n(R)$, and $\gamma=(c_1,c_2,\cdots,c_n)\in Um_n(R)$, $n\geq3$. If $u\in R$ is a unit modulo $c_3R+\cdots+c_nR$, then $$(c_1,c_2,\cdots,c_n)\sim_G(uc_1,uc_2,c_3,\cdots,c_n)$$
\end{lemma}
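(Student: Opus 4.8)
\emph{Proof proposal.} The plan is to deduce the statement from Lemma 3.1 by applying that lemma twice, routed through one well-chosen auxiliary row. Throughout write $I := c_3 R + \cdots + c_n R$, so that ``unit modulo $c_3R+\cdots+c_nR$'' simply means ``invertible in $R/I$''.

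First I would use unimodularity of $\gamma=(c_1,c_2,\ldots,c_n)$: it gives $c_1R+c_2R+I=R$, hence there are $b_1,b_2\in R$ and $t\in I$ with $c_1b_1+c_2b_2 = 1-t$. In particular $c_1b_1+c_2b_2\equiv 1\pmod I$ is a unit modulo $I$, so Lemma 3.1 applies and yields
$$(c_1,c_2,c_3,\ldots,c_n)\sim_G(b_1,b_2,c_3,\ldots,c_n).$$
Note also that the new row is again unimodular: from $c_1b_1+c_2b_2+t=1$ with $t\in I$ we get $1\in b_1R+b_2R+I$, so $(b_1,b_2,c_3,\ldots,c_n)\in Um_n(R)$.

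Now I would apply Lemma 3.1 a second time, this time to the unimodular row $(b_1,b_2,c_3,\ldots,c_n)$, taking the pair $(uc_1,uc_2)$ in the role that $(b_1,b_2)$ plays in the statement of the lemma. The hypothesis to check is that $b_1(uc_1)+b_2(uc_2)$ be a unit modulo $I$; but $b_1(uc_1)+b_2(uc_2)=u(c_1b_1+c_2b_2)\equiv u\pmod I$, which is a unit modulo $I$ by the assumption on $u$. Hence $(b_1,b_2,c_3,\ldots,c_n)\sim_G(uc_1,uc_2,c_3,\ldots,c_n)$, and composing the two equivalences (legitimate since $\sim_G$ is an equivalence relation, $G$ being a group) gives $(c_1,c_2,\ldots,c_n)\sim_G(uc_1,uc_2,c_3,\ldots,c_n)$. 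I do not expect any genuine obstacle here; the only points requiring care are verifying that the intermediate row stays unimodular so that Lemma 3.1 can legitimately be reapplied to it, and keeping straight which entries play which role in the second invocation.
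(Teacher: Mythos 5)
Your proposal is correct and matches the paper's proof: the paper likewise takes $x_1,x_2$ (your $b_1,b_2$) from the unimodularity relation, notes that both $c_1x_1+c_2x_2$ and $(uc_1)x_1+(uc_2)x_2$ are units modulo $c_3R+\cdots+c_nR$, and applies Lemma 3.1 twice through the intermediate row $(x_1,x_2,c_3,\ldots,c_n)$. Your extra check that the intermediate row remains unimodular is a point the paper leaves implicit but is the same argument.
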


\begin{proof}
Because $(c_1,c_2,\cdots,c_n)\in Um_n(R)$, $u\in R$ is a unit modulo $c_3R+\cdots+c_nR$, then there exist $x_1,x_2,\cdots,x_n,v,y_1,y_2,\cdots,y_n\in R$ such that
\begin{equation*}
\begin{split}
&c_1x_1+c_2x_2+c_3x_3+\cdots+c_nx_n=1,\\
&vu+c_3y_3+\cdots+c_ny_n=1
\end{split}
\end{equation*}
Then
 \begin{equation}\label{1}
   \begin{split}
v((uc_1)x_1+(uc_2)x_2)& = vu(c_1x_1+c_2x_2)\\
  &=(1-c_3y_3-\cdots-c_ny_n)\cdot(1-c_3x_3-\cdots-c_ny_n)\\
  &\equiv1 \mod\ (c_3R+\cdots+c_nR) \end{split}
\end{equation}

From the equation above, we see that $c_1x_1+c_2x_2$ and $(uc_1)x_1+(uc_2)x_2$ are units modulo $c_3R+\cdots+c_nR$. Applying Lemma 3.1 twice, we get
$$\gamma\sim_G(x_1,x_2,c_3,\cdots,c_n)\sim_G(uc_1,uc_2,c_3,\cdots,c_n).$$
\end{proof}

For any unitary commutative ring R, we see that $R[x]$ is also a unitary commutative ring, so the two lemmas above are also true for $R[x]$.

\begin{lemma}
Let $e(x)\in R[x]$ such that $\deg e(x)\geq1$ and $e(0)$ is a unit. Then we have:
\begin{enumerate}
\item  $x$ is a unit modulo $e(x)R[x]$;

\item if $(x^{s_1}f_1(x), x^{s_2}f_2(x), e(x)) \in Um_3(R[x])$, then
\begin{equation*}
(x^{s_1}f_1(x), x^{s_2}f_2(x), e(x)) \sim_{E_3(R[x])} (x^{s_1-k}f_1(x), x^{s_2-k}f_2(x), e(x)),
\end{equation*}
where $s_1,\ s_2,\ k$ are positive integers, and $s_1, s_2 \geq k$;

\item if $(f_1(x), f_2(x), e(x)) \in Um_3(V[x])$, then
\begin{equation*}
(f_1(x), f_2(x), e(x)) \sim_{E_3(R[x])} (x^kf_1(x), x^kf_2(x), e(x)),
\end{equation*}
where $k$ is a positive integer.
\end{enumerate}
\end{lemma}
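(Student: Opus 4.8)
The plan is to establish the three parts in order: part (1) is an elementary identity, and parts (2) and (3) each reduce to a single application of Lemma 3.2 once (1) is in hand. For (1), write $e(x) = e(0) + x\,g(x)$ with $g(x)\in R[x]$ (so $g$ gathers the coefficients $e_1,e_2,\dots$ of $e$). Then $x\,g(x) = e(x) - e(0) \equiv -e(0) \pmod{e(x)R[x]}$, and since $e(0)$ is a unit, multiplying by $-e(0)^{-1}$ gives $x\cdot\bigl(-e(0)^{-1}g(x)\bigr)\equiv 1 \pmod{e(x)R[x]}$. Hence $x$, and therefore $x^{k}$ for every $k\geq 1$, is a unit modulo $e(x)R[x]$; the same computation holds verbatim over $V[x]$.

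For (2) and (3), the one point requiring attention is that Lemma 3.2 takes a \emph{unimodular} row as input, so I would first record: for any $a,b\in R[x]$ and exponents $m,m'\geq 0$, the row $(x^{m}a,\,x^{m'}b,\,e(x))$ is unimodular over $R[x]$ if and only if $(a,b,e(x))$ is. Indeed, in the quotient $\bar R := R[x]/e(x)R[x]$ the element $\bar x$ is a unit by (1), so $\overline{x^{m}a}\,\bar R = \bar a\,\bar R$ and $\overline{x^{m'}b}\,\bar R = \bar b\,\bar R$; thus the images of the two rows generate the same ideal of $\bar R$, and one row is unimodular exactly when the other is. Granting this, (2) follows at once: from $(x^{s_1}f_1,x^{s_2}f_2,e)\in Um_3(R[x])$ we get $(x^{s_1-k}f_1,x^{s_2-k}f_2,e)\in Um_3(R[x])$, and Lemma 3.2 over $R[x]$ applied to $\gamma=(x^{s_1-k}f_1,x^{s_2-k}f_2,e)$ with $u=x^{k}$ — a unit modulo $e(x)R[x]$ by (1) — yields $(x^{s_1-k}f_1,x^{s_2-k}f_2,e)\sim_{E_3(R[x])}(x^{s_1}f_1,x^{s_2}f_2,e)$, and symmetry of the relation gives the claim. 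Part (3) is the same argument run in the opposite direction: working over $V[x]$, the unimodularity equivalence turns $(f_1,f_2,e)\in Um_3$ into $(x^{k}f_1,x^{k}f_2,e)\in Um_3$, and Lemma 3.2 applied to $\gamma=(f_1,f_2,e)$ with $u=x^{k}$ produces $(f_1,f_2,e)\sim_{E_3(R[x])}(x^{k}f_1,x^{k}f_2,e)$.

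The argument is short, and essentially all of its content is imported from Lemma 3.2; the only place where real care is needed — the main obstacle, to the extent there is one — is verifying that sliding a factor $x^{k}$ into or out of the first two coordinates does not disturb unimodularity of the length-three row, and this is exactly where the standing hypotheses $\deg e(x)\geq 1$ and $e(0)$ a unit are used, through part (1).
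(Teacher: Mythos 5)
Your proposal is correct and follows essentially the same route as the paper: part (1) via writing $e(x)=e(0)+x\,g(x)$ and producing the explicit relation $x\cdot(-e(0)^{-1}g(x))+e(0)^{-1}e(x)=1$, and parts (2)--(3) by a single application of Lemma 3.2 with $u=x^{k}$, a unit modulo $e(x)R[x]$. The only difference is that you spell out, via the quotient $R[x]/e(x)R[x]$, why removing or inserting the factor $x^{k}$ preserves unimodularity of the row --- a step the paper's proof dismisses as ``clearly'' --- which is a welcome but not essentially different addition.
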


\begin{proof}
Let $e(x)=b_nx^n + \cdots + b_1x + b_0$ such that $b_n\neq0$ and $b_0$ is a unit, and $g_1(x) = (e(x) - b_0)/x$. Then $x(-b_0^{-1}g_1(x))+b_0^{-1}e(x)=1$, so $x$ is a unit modulo $e(x)R[x]$. Clearly, $(x^{s_1-k}f_1(x), x^{s_2-k}f_2(x), e(x)) \in Um_3(R[x])$. By lemma 3.2, the desired result is obtained.
\end{proof}

\begin{lemma}
Let $a\in R$, $(af(x),ag(x),h(x))\in Um_3(R[x])$. Then
$$(af(x),ag(x),h(x))\sim_{E_3(R[x])}(f(x),g(x),h(x))$$
\end{lemma}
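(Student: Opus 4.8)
The plan is to reduce the statement directly to Lemma 3.2, applied over the commutative ring $R[x]$ rather than $R$ (legitimate by the remark immediately following the proof of Lemma 3.2). The first step is to record that the target row $(f(x),g(x),h(x))$ is itself unimodular: since $(af(x),ag(x),h(x))\in Um_3(R[x])$ there are $p,q,r\in R[x]$ with $af p+ag q+h r=1$, and rewriting this as $f\cdot(ap)+g\cdot(aq)+h\cdot r=1$ shows $(f,g,h)\in Um_3(R[x])$.

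The crucial second step is to extract from the very same Bézout identity the fact that the constant $a$ is a unit modulo $h(x)R[x]$. Indeed, grouping $af p+ag q+h r=1$ as $a\bigl(fp+gq\bigr)+h r=1$ gives $a\cdot(fp+gq)\equiv 1 \pmod{h(x)R[x]}$, so $a\in R\subseteq R[x]$ is invertible modulo the ideal generated by the third coordinate $c_3=h(x)$.

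With these two observations, I would invoke Lemma 3.2 over $R[x]$ with $G=E_3(R[x])$, $\gamma=(f(x),g(x),h(x))\in Um_3(R[x])$, and $u=a$: it yields $(f(x),g(x),h(x))\sim_{E_3(R[x])}(af(x),ag(x),h(x))$, and since $\sim_{E_3(R[x])}$ is an equivalence relation this is precisely the claimed equivalence.

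I do not expect a genuine obstacle here; the only point requiring attention is the recognition that unimodularity of a row in which a common scalar factor $a$ has been pulled out of two coordinates forces $a$ to be a unit modulo the remaining coordinate, which is exactly the hypothesis Lemma 3.2 consumes. (Should one prefer to avoid citing Lemma 3.2, the same conclusion follows by running the two-step chain of Lemma 3.1 by hand — passing from $(f,g,h)$ to a coordinate pair coming from the Bézout identity and back to $(af,ag,h)$ — but routing through Lemma 3.2 is the shortest path.)
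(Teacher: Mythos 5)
Your proposal is correct and matches the paper's own argument: the paper likewise rewrites the Bézout identity for $(af,ag,h)$ to conclude that $a$ is a unit modulo $h(x)R[x]$ and that $(f,g,h)\in Um_3(R[x])$, then invokes Lemma 3.2 over $R[x]$. Your explicit remark that the equivalence is symmetric (so the Lemma 3.2 conclusion $(f,g,h)\sim(af,ag,h)$ gives the stated direction) is just a slightly more careful phrasing of the same step.
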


\begin{proof}
Firstly, there exist $f_1(x),f_2(x),f_3(x) \in R[x]$ such that
\begin{equation*}
af(x) \cdot f_1(x)+ag(x)\cdot f_2(x)+h(x)f_3(x)=1,
\end{equation*}
that is,
\begin{equation*}
a(f(x)\cdot f_1(x)+g(x)\cdot f_2(x))+f_3(x)\cdot h(x)=1.
\end{equation*}
Thus $a$ is a unit modulo $h(x)R[x]$, and hence $(f(x),g(x),h(x)) \in Um_3(R[x])$. By Lemma 3.2, the desired result is obtained.
\end{proof}

\begin{lemma} $([10])$
Let $g(x) \in R[x]$ be of degree $n > 0$ such that $g(0)$ is a unit in $R$. Then for any $f(x)\in R[x]$ and $k \geq \deg f(x)- \deg g(x) + 1$, there exists $h_k(x) \in R[x]$ of degree $ < n$ such that $f(x)\equiv x^k\cdot h_k(x) \mod g(x)R[x]$.
\end{lemma}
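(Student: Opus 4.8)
The plan is to reduce the whole statement to the single congruence
\[
x\,u(x) \equiv 1 \pmod{g(x)R[x]}, \qquad \deg u(x) \le n-1 ,
\]
which is essentially already contained in the proof of Lemma~3.3. Writing $g(x) = b_nx^n + \cdots + b_1x + b_0$ with $b_0$ a unit, set $g_1(x) = (g(x) - b_0)/x$ and $u(x) = -b_0^{-1}g_1(x)$; then $x\,u(x) + b_0^{-1}g(x) = 1$, so $x\,u(x)\equiv 1 \pmod{g(x)R[x]}$, and clearly $\deg u(x) = \deg g_1(x) \le n-1$.

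Next I would prove the following reduction step, which is the heart of the argument: for every $f(x) \in R[x]$ there is $\tilde f(x) \in R[x]$ with
\[
f(x) \equiv x\,\tilde f(x) \pmod{g(x)R[x]}, \qquad \deg \tilde f(x) \le \max\bigl(n-1,\ \deg f(x)-1\bigr).
\]
To see this, split off the constant term, $f(x) = f(0) + x f_1(x)$ with $\deg f_1(x) \le \deg f(x)-1$, and use $1 \equiv x\,u(x)$ to rewrite the constant $f(0)$ as $f(0)\,x\,u(x)$; this gives $f(x) \equiv x\bigl(f(0)u(x) + f_1(x)\bigr) \pmod{g(x)R[x]}$, so $\tilde f(x) := f(0)u(x) + f_1(x)$ works, the degree bound following from $\deg u(x) \le n-1$.

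Finally I would iterate this step. Starting from $f^{(0)} = f$ and applying the reduction repeatedly, an easy induction on $j$ gives $f(x) \equiv x^{j} f^{(j)}(x) \pmod{g(x)R[x]}$ with $\deg f^{(j)}(x) \le \max\bigl(n-1,\ \deg f(x)-j\bigr)$. Taking $j = k$ and invoking the hypothesis $k \ge \deg f(x) - n + 1 = \deg f(x) - \deg g(x) + 1$ forces $\deg f(x)-k \le n-1$, so $h_k(x) := f^{(k)}(x)$ has degree $< n$, as required. (In the degenerate situation $k \le 0$ the hypothesis already forces $\deg f(x) < n$, and one simply takes $h_k = f$.)

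I do not anticipate a genuine obstacle here: the only point needing care is the degree bookkeeping in the iteration — specifically that it is the bound $\max(n-1,\deg f - j)$, rather than $\deg f - j$, that the $\max$ in the reduction step propagates, so that once the degree has dropped to $n-1$ it cannot rise again. Conceptually the lemma is just polynomial division ``from the bottom'': since it is $g(0)$, not the leading coefficient of $g$, that is a unit, one cannot lower degrees by the usual long division, but one can peel off powers of $x$ one at a time, and after $\deg f - \deg g + 1$ such steps the remaining factor is forced to have degree less than $\deg g$.
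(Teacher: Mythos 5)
Your proof is correct: the congruence $x\,u(x)\equiv 1 \pmod{g(x)R[x]}$ with $\deg u\le n-1$, iterated with the bound $\deg f^{(j)}\le\max(n-1,\deg f-j)$, does yield the lemma, and the degree bookkeeping is handled properly. Note that the paper itself offers no proof of this statement --- it is quoted from reference [10] --- so there is nothing internal to compare against except the observation in the proof of Lemma~3.3 that $x$ is invertible modulo $g(x)$ when $g(0)$ is a unit; your argument is exactly the natural elaboration of that observation and matches the standard proof.
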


\begin{lemma} $([4])$
Let $R$ be a commutative local ring and  $f=(f_1,f_2,\cdots,f_n)\in Um_n(R[x])$ , where $n\geq 3$,  $f_1$ is unitary. Then
$$f(x)\sim_{E_n(R[x])}\,f(0)\sim_{E_n(R)}\,(1,0,\cdots,0).$$
\end{lemma}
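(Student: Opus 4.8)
The plan is to handle the two equivalences in turn, the second being essentially free. Since $f(x)\in Um_n(R[x])$, evaluating any relation $\sum_i f_ig_i=1$ at $x=0$ shows $f(0)\in Um_n(R)$; as $R$ is local, not all coordinates of $f(0)$ lie in the maximal ideal, so some $f_i(0)$ is a unit. Pivoting on that entry clears the others, and a signed permutation followed by normalising the leading unit gives $f(0)\sim_{E_n(R)}(1,0,\dots,0)$. Since $E_n(R)\subseteq E_n(R[x])$, it remains only to prove $f(x)\sim_{E_n(R[x])}(1,0,\dots,0)$.

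The device for this is reduction modulo $f_1$. Because $f_1$ is \emph{unitary}, the quotient $B:=R[x]/(f_1)$ is a free $R$-module of finite rank, hence module-finite over $R$; since $R$ is local, every maximal ideal of $B$ lies over the maximal ideal of $R$ and thus corresponds to a maximal ideal of the Artinian ring $B/\mathfrak m B$, so $B$ is semilocal. Reducing $\sum_i f_ig_i=1$ modulo $f_1$ kills the first term and shows that the image of $(f_2,\dots,f_n)$ in $B$ is a unimodular row of length $n-1\ge 2$. As semilocal rings have stable rank one, $E_{n-1}(B)$ acts transitively on $Um_{n-1}(B)$, so there is $\bar N\in E_{n-1}(B)$ taking that image to $(1,0,\dots,0)$. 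Lifting each elementary generator of $\bar N$ through the surjection $R[x]\twoheadrightarrow B$ yields $N\in E_{n-1}(R[x])$, which we regard as an element of $E_n(R[x])$ acting on the coordinates $2,\dots,n$. Applying $N$ replaces $f$ by $(f_1,h_2,\dots,h_n)$ with $h_2\equiv 1$ and $h_j\equiv 0\pmod{f_1}$ for $j\ge 3$; equivalently $h_2=1+pf_1$ and $h_j=p_jf_1$ for suitable $p,p_j\in R[x]$.

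What remains is a short string of elementary moves. Using the first coordinate $f_1$, subtract $p_j$ times it from coordinate $j$ to clear $h_3,\dots,h_n$, leaving $(f_1,\,1+pf_1,\,0,\dots,0)$. The identity $(1+pf_1)-pf_1=1$ then lets us turn one of the spare zero coordinates — this is where $n\ge 3$ enters — into a $1$; that coordinate now clears both $f_1$ and $1+pf_1$, and a final signed permutation with a unit-normalisation produces $(1,0,\dots,0)$. Together with the first paragraph this gives $f(x)\sim_{E_n(R[x])}(1,0,\dots,0)\sim_{E_n(R)}f(0)$, and no Noetherian hypothesis was used. The one genuinely non-routine input is the passage in the middle paragraph: recognising that $R[x]/(f_1)$ is semilocal and invoking Bass's stable-range transitivity for unimodular rows over it; everything on either side of that step is bookkeeping with elementary matrices.
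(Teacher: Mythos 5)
Your argument is correct, and it is essentially the standard proof of this classical lemma, which the paper itself does not prove but simply quotes from reference [4] (Lam): reduce modulo the monic entry $f_1$, observe that $R[x]/(f_1)$ is module-finite over the local ring $R$ and hence semilocal of stable rank one, move $(f_2,\dots,f_n)$ to $(1,0,\dots,0)$ there, lift the elementary matrices, and finish with routine elementary operations using $n\ge 3$. All the supporting steps you invoke (integrality forcing maximal ideals of $R[x]/(f_1)$ to lie over the maximal ideal of $R$, transitivity of $E_{n-1}$ on unimodular rows over a ring of stable rank one, and the final bookkeeping) are sound, so there is nothing to add.
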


\begin{theorem}
Let $V$ be a valuation ring and $(f(x),g(x),e(x))\in Um_3(V[x])$. Then
$$(f(x),g(x),e(x))\sim_{E_3(V[x])}(1,0,0)$$
\end{theorem}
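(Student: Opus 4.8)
The plan is to show that any $(f,g,e)\in Um_3(V[x])$ can be transformed by elementary operations into a row containing a unit; from there it is $E_3$-equivalent to $(1,0,0)$. The route is an induction on the degree of a suitably chosen entry, the valuation hypothesis --- in the shape of the total order on the principal ideals of $V$ --- entering only after the other two entries have been reduced modulo the chosen one and freed of powers of $x$.

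First come the easy reductions. If one of $f,g,e$ is a unit of $V[x]$, column operations using that entry clear the other two, and a row with a unit entry is visibly $E_3$-equivalent to $(1,0,0)$. In general, evaluating a unimodular relation $fA+gB+eC=1$ at $x=0$ shows $(f(0),g(0),e(0))\in Um_3(V)$, so --- $V$ being local --- one of the constant terms is a unit; after permuting the entries (permutations are realized in $E_3$ up to harmless sign changes) I may assume $e(0)$ is a unit, with $n:=\deg e\ge 1$.

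Now the induction on $n$. By Lemma 3.5, for $k\gg 0$ one has $f\equiv x^kh_1$ and $g\equiv x^kh_2\pmod e$ with $\deg h_i<n$; subtracting multiples of the third column gives $(f,g,e)\sim_{E_3}(x^kh_1,x^kh_2,e)$, and Lemma 3.3(2) then yields $(f,g,e)\sim_{E_3}(h_1,h_2,e)$ with $\deg h_i<n$ and $(h_1,h_2,e)\in Um_3(V[x])$. If $h_1(0)$ or $h_2(0)$ is a unit, permute that entry into the third slot; it has degree $<n$, so re-running the previous sentence replaces the modulus by one of strictly smaller degree and induction finishes the job. When $n=1$ the $h_i$ are constants of $V$: if neither is a unit the valuation property gives $h_1\mid h_2$ or $h_2\mid h_1$, an elementary operation replaces one of them by $0$, and since the remaining pair $(a,b)$ is then unimodular of length two, $(a,b,0)\sim_{E_3}(a,b,aP+bQ)=(a,b,1)\sim_{E_3}(1,0,0)$.

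The remaining --- and main --- case is $n\ge 2$ with $h_1(0),h_2(0)\in\mathfrak{m}$, and this is where I expect the real work to lie. Over a valuation ring the content of a polynomial is finitely generated, hence principal, and any two contents are comparable; writing $h_i=a_ih_i^{*}$ with $h_i^{*}$ primitive and, say, $a_1\mid a_2$, both $h_1$ and $h_2$ are divisible by $a_1$, so Lemma 3.4 gives $(h_1,h_2,e)\sim_{E_3}(h_1^{*},h_2^{\flat},e)$ for a suitable $h_2^{\flat}$, with a primitive first entry. Then I "shift off powers of $x$": since $e(0)$ is a unit, subtracting $h_1^{*}(0)e(0)^{-1}$ times the third column from the first (and the analogous operation on the second) makes the first two entries divisible by $x$, and Lemma 3.3(2) divides that $x$ out; because $h_1^{*}(0)\in\mathfrak{m}$, one checks modulo $\mathfrak{m}$ that the coefficients of the (still primitive) first entry are shifted down by one degree, so after finitely many shifts its constant term becomes a unit and we are back in the favourable case, now with the modulus of degree $\le n-1$. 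The points requiring care --- and the crux of the argument --- are that this shifting loop terminates and preserves unimodularity, and that the degenerate situations (an entry that becomes $0$, contents comparing the other way, or a leading coefficient of $e$ that obstinately stays in $\mathfrak{m}$) must be handled directly; throughout, the only structural input is the total order on the ideals of $V$, which is precisely why the valuation hypothesis is indispensable. Granting Theorem 3.1, the other assertions announced in the abstract follow from it via the cited local--global principle for elementary matrices together with routine arguments for $n\ge 3$.
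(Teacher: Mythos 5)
Your proposal is correct, and its core step is genuinely different from the paper's. Both arguments share the outer frame: use locality to normalize so that $e(0)$ is a unit, reduce $f,g$ modulo $e$ by Lemma 3.5 and strip powers of $x$ by Lemma 3.3, use the total order on principal ideals of $V$ to pull out a common content and make one entry primitive via Lemma 3.4, and induct on $\deg e$. The divergence is in how the unit coefficient of the primitive entry is exploited. The paper compares its leading coefficient $a_t$ with the leading coefficient $c_n$ of $e$: if $a_t\mid c_n$ it lowers $\deg e$ and applies the induction hypothesis, and if $c_n\mid a_t$ it replaces $x^{n-t}f_1$ by $x^{n-t}f_1-d\,e$, pushing the unit coefficient upward until an entry with unit leading coefficient appears, at which point Lemma 3.6 (the local-ring result for rows containing a monic entry) finishes. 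You push the unit coefficient downward instead: subtract $F(0)e(0)^{-1}e$ from the first entry (and the analogue from the second) and divide out $x$ by Lemma 3.3(2); since $F(0)\in\mathfrak{m}$ and $V/\mathfrak{m}$ is a field, the new entry reduces mod $\mathfrak{m}$ to $\overline{F}/x$, so the unit coefficient drops one position per pass while all degrees stay $\le n-1$, and after at most $\deg F$ passes the constant term is a unit; swapping that entry into the modulus slot hands the problem to the induction hypothesis. The points you flag as delicate do go through: termination is exactly your mod-$\mathfrak{m}$ shift argument, unimodularity is automatic because every move is an $E_3$-equivalence, the degenerate cases (a zero entry, contents comparable the other way, a unit entry) are routine, and the leading coefficient of $e$ never enters your scheme, so that worry is vacuous. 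What each route buys: yours avoids Lemma 3.6 and the leading-coefficient case analysis altogether, relying only on Lemmas 3.2--3.5 and the induction, so it is more self-contained; the paper's version manufactures a monic entry, which is precisely the template it reuses in Lemma 4.6 for the $\SL_3$ analysis of Section 4, so it buys structural parallelism with the rest of the paper.
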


\begin{proof}
Note that $(f(0),h(0),e(0))\in Um_3(V[x])$. Since $V$ is a valuation ring and hence $V$ is local, we may assume that $e(0)=1$.

We proceed by induction on $\deg e(x)$. If $\deg e(x)=0$, the result is obviously true. Now assume that the conclusion holds for $e(x)$ with degree $\leq n-1$. In the case that $\deg e(x) = n \geq 1$, write $e(x)= c_n x^n + c_{n-1} x^{n-1} + \cdots + c_1x + 1$. By Lemma 3.5, for any $s \in \mathbb{N}$ which is big enough, there are $h_1(x), h_2(x) \in V[x]$ such that $f(x) \equiv x^{s}h_1(x)$, $g(x)\equiv x^{s}h_2(x)$ mod $e(x)V[x]$, and $\deg h_1(x),\deg h_2(x) \leq n-1$. Then
\begin{equation}\label{2}
  (f(x),g(x),e(x))\sim_{E_3(V[x])}(x^{s}h_1(x),x^{s}h_2(x),e(x)).
\end{equation}

Since $V$ is a valuation ring, We may assume that $h_1(x)=af_1(x),h_2(x)=bg_1(x)$, where $a,b\in V$, $f_1(x)$ and $g_1(x)$ are primitive polynomials, and $\deg f_1(x) = \deg h_1(x)$, $\deg g_1(x) = \deg h_2(x)$. Since $V$ is a valuation ring, one knows that $a \mid b$ or $b \mid a$. Without loss of generality we may assume that $a \mid b$. Set $b=a \cdot c$. Then $h_2(x) = a \cdot cg_1(x)$, for any $k\in \mathbb{N} (k<s)$. By (2) and  Lemma 3.3, 3.4, we have that
\begin{equation}\label{3}
  \begin{split}
& (f(x),g(x),e(x))\sim_{E_3(V[x])}(ax^{s}f_1(x),ax^{s}cg_1(x),e(x))\\
  &\sim_{E_3(V[x])}(x^{s}f_1(x),x^{s}cg_1(x),e(x))\sim_{E_3(V[x])}(x^{s-k}f_1(x),cx^{s-k}g_1(x),e(x)).
  \end{split}
\end{equation}

Since $f_1(x)$ is primitive, we may suppose that $f_1(x)=a_tx^t+\cdots+a_mx^m+\cdots+a_0$, where $a_t\neq0$, $t\leq n-1$, and $a_m$ is a unit. Pick $k$ such that $s-k+t=n$, i.e., $s-k=n-t\geq 1$ (since $t\leq n-1$).

(1) If $a_t\mid c_n$, let $c_n=d\cdot a_t$, and $e_1(x)=e(x)-d\cdot x^{s-k}f_1(x)$. Since $s-k+t=n$, $\deg e_1(x) \leq n-1$, and $e_1(0)=e(0)=1$. By (3), we have
$$(f(x),g(x),e(x))\sim_{E_3(V[x])}(x^{s-k}f_1(x),cx^{s-k}g_1(x),e_1(x)).$$
Since $\deg e_1(x) \leq n-1$, by the induction hypothesis, we have
$$(f(x),g(x),e(x))\sim_{E_3(V[x])}(1,0,0)$$

(2) If $a_t\nmid c_n$, then $c_n\mid a_t$. Set $a_t=d\cdot c_n$. Then $d$ is not a unit. Let $f_2(x)=x^{n-t}f_1(x)-d\cdot e(x)$. we obtain that
\begin{equation*}
f_2(x) = (a_{t-1}-dc_{n-1})x^{n-1}+\cdots+(a_m-dc_{n-t+m})x^{n-t+m}+\cdots+(-d).
\end{equation*}
Since $d$ is not a unit, $a_m-dc_{n-t+m}$ is a unit, namely that the degree of the term where the coefficient is a unit in $f_2(x)$ is greater than that of $f_1(x)$ by one (since $n-t \geq 1$), and $\deg f_2(x)\leq n-1$.  Since $n-t=s-k$, by (3), we have
\begin{equation}\label{4}
(f(x),g(x),e(x))\sim_{E_3(V[x])}(f_2(x),cx^{s-k}g_1(x),e(x)).
\end{equation}

Suppose that $\deg f_2(x)=t_1$. By Lemma 3.3 and (4), we have

$$(f(x),g(x),e(x))\sim_{E_3(V[x])}(x^{n-t_1}f_2(x),cx^{n+s-t_1-k}g_1(x),e(x))$$

Note that $f_2(x)$ is also primitive, and the degree of the term where the coefficient is a unit in $f_2(x)$ is greater than that of $f_1(x)$ by one. If $c_n$ can be divided by the leading coefficient of $f_2(x)$, by case (1), the result is true. Otherwise, by repeating the preceding procedure, we obtain that $f_i(x)$ such that the leading coefficient of $f_i(x)$ ($i \leq n-m+1$) will be a unit, and
$$(f(x),g(x),e(x))\sim_{E_3(V[x])}(f_i(x),cx^pg_1(x),e(x))$$
Since $f_i(x)$ is unitary, by Lemma 3.6,
$$(f(x),g(x),e(x))\sim_{E_3(V[x])}(1,0,0).$$
Thus the conclusion is true.
\end{proof}

From Theorem 3.1, we obtain the following easy result.

\begin{theorem}
The Hermite ring conjecture is true for valuation rings.
\end{theorem}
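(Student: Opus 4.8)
The statement to be proved is that $V[x]$ is a Hermite ring whenever $V$ is a valuation ring; as a valuation ring is itself Hermite, this is precisely the Hermite ring conjecture for this class. By the remark recalled in the Introduction, it suffices to show that every unimodular row $w \in Um_n(V[x])$ can be completed to an invertible matrix, for every $n$; for $n \ge 3$ we will in fact obtain the sharper $w \sim_{E_n(V[x])} (1, 0, \dots, 0)$. For $n = 1$ a unimodular row is a unit, and for $n = 2$ a row $(a,b)$ with $as + bt = 1$ is completed by $\left(\begin{smallmatrix} a & b\\ -t & s\end{smallmatrix}\right) \in \SL_2(V[x])$. For $n = 3$ this is exactly Theorem 3.1. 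The content is therefore concentrated entirely in the case $n \ge 4$.

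For $n \ge 4$ I would proceed by descending induction on $n$, reducing a length-$n$ unimodular row over $V[x]$ to a length-$3$ one. The cleanest route is to observe that the argument of Theorem 3.1 never uses that there are exactly three entries: Lemmas 3.1, 3.2 and 3.6 already apply to rows of arbitrary length $n \ge 3$, while Lemmas 3.3 and 3.4, although stated for $n = 3$, have proofs resting only on Lemma 3.2 and on divisibility in $R$ (resp.\ $V$), so they extend verbatim to rows $(f_1, \dots, f_{n-1}, e) \in Um_n(V[x])$. Rerunning the proof of Theorem 3.1 with this understanding: first pull the $V$-content out of the first $n-1$ entries — here one uses that a valuation ring totally orders its elements by divisibility, so some content divides all the others — then perform the same $x^k$-shifting and degree reduction on the leading (now primitive) polynomial until one entry becomes unitary, and conclude by the length-$n$ form of Lemma 3.6.

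A shorter alternative, if one is prepared to establish the input, is a stable-range estimate: $\mathrm{sr}(V) = 1$ because $V$ is local, and if one knows $\mathrm{sr}(V[x]) \le 2$ then Bass's theorem renders every unimodular row of length $\ge 4$ over $V[x]$ elementarily equivalent to $(1, 0, \dots, 0)$, so that only the length-$3$ case — Theorem 3.1 — carries any real content. Either way, once the case $n \ge 4$ is disposed of, the deduction of the Hermite property is purely formal.

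The main obstacle is precisely this passage from $Um_3$ to $Um_n$. One must either check carefully that the auxiliary lemmas and the somewhat delicate bookkeeping in the proof of Theorem 3.1 genuinely survive the introduction of extra spectator coordinates, or else prove the bound $\mathrm{sr}(V[x]) \le 2$ for an arbitrary, possibly infinite Krull-dimensional, valuation ring — a regime in which the naive Bass estimate $\mathrm{sr}(V[x]) \le \dim V[x] + 1$ is vacuous. Everything else in the argument is routine.
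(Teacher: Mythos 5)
For length-$3$ rows your argument is exactly the paper's: Theorem 3.1 produces $P\in E_3(V[x])$ with $\omega\cdot P=(1,0,0)$, and the completion step is the purely formal one you describe (the paper writes the completing matrix explicitly, taking the last two rows of $P^{-1}$ as the bottom block of $A$, so that $A\cdot P=I_3$). Where you diverge is in scope: the paper's proof of this theorem treats \emph{only} unimodular rows of length $3$ and says nothing about $n=2$ or $n\geq 4$, whereas you correctly recall that the Hermite property, as defined via stably free modules, requires completability of unimodular rows of every length. So the obstacle you single out -- passing from $Um_3(V[x])$ to $Um_n(V[x])$ for $n\geq 4$ -- is genuine, but be aware it is not resolved by the paper either; you have surfaced a gap the paper silently skips rather than missed an argument it contains.

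That said, your own treatment of $n\geq 4$ remains a sketch. The claim that Lemmas 3.3 and 3.4 and the proof of Theorem 3.1 ``extend verbatim'' to rows $(f_1,\dots,f_{n-1},e)$ needs real checking: Lemma 3.2 rescales exactly two coordinates at a time by an element invertible modulo the remaining ones, so stripping the common content $a$ from all $n-1$ leading coordinates (or arranging the induction so that only two coordinates plus $e$ are ever active while the others are spectators) requires an explicit argument, e.g. pairing coordinates and handling the parity issue, together with verifying that the leading-coefficient reduction against $e(x)$ still closes the induction on $\deg e$. The alternative route via $\mathrm{sr}(V[x])\leq 2$ is, as you yourself note, not available off the shelf for valuation rings of arbitrary (possibly infinite) Krull dimension, so it cannot be cited as known. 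In short: your proposal matches the paper on everything the paper actually proves, and honestly flags the missing general-$n$ case, but neither of your two suggested completions of that case is carried out.
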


\begin{proof}
Let $\omega=(f(x),g(x),e(x)) \in V[x]$ be a unimodular row. It is well-known that $V$ is a Hermite ring. By Theorem 3.1, there is $P \in E_3(V[x])$ such that $(f(x),g(x),e(x))\cdot P=(1,0,0)$. Set
$$N=\left(
\begin{array}{ccc}
0&1&0\\
0&0&1\\
\end{array}
\right)
\cdot P^{-1},~\,~\,
A=\left(
\begin{array}{ccc}
f(x)&g(x)&e(x)\\
 &N& \\
\end{array}
\right),
$$
Then $A\cdot P=I_3$. So $A$ is an invertible square matrix, and $\omega$ can be completed to an invertible square matrix $A$. Thus $V[x]$ is also a Hermite ring.
\end{proof}

\section{Special linear Groups for valuation rings}

Throughout this section $R$ is a unitary commutative ring. We first introduce five well-known lemmas.

\begin{lemma} $([4])$
Let  $A_1=\left(
\begin{array}{cc}
0&I_n\\
-I_n&0\\
\end{array}
\right),
A_2=\left(
\begin{array}{cc}
0&-I_n\\
I_n&0\\
\end{array}
\right)\in E_{2n}(R)$,
Then $A_1,A_2 \in E_{2n}(R)$, where $I_n$ is the $n\times n$ identity matrix over $R$.
\end{lemma}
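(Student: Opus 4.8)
The plan is to exploit the classical fact that the $2\times 2$ ``rotation'' matrix is a product of three transvections, and then to lift this identity to the block level. First I would record that for any commutative ring $R$ and any $n\times n$ matrix $B=(B_{ij})$ over $R$, the block matrices
$$\begin{pmatrix} I_n & B \\ 0 & I_n \end{pmatrix}\quad\text{and}\quad\begin{pmatrix} I_n & 0 \\ B & I_n \end{pmatrix}$$
belong to $E_{2n}(R)$: the first is the product over all $1\le i,j\le n$ of the elementary matrices $I_{2n}+B_{ij}{\bf e}_{i,\,j+n}$ (the second, of the $I_{2n}+B_{ij}{\bf e}_{i+n,\,j}$), and these commute with one another because the nonzero off-diagonal entries sit in pairwise distinct rows and columns; hence the product is exactly the stated block matrix with no cross terms.

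Second, I would check the block factorization
$$A_1=\begin{pmatrix} 0 & I_n \\ -I_n & 0 \end{pmatrix}=\begin{pmatrix} I_n & I_n \\ 0 & I_n \end{pmatrix}\begin{pmatrix} I_n & 0 \\ -I_n & I_n \end{pmatrix}\begin{pmatrix} I_n & I_n \\ 0 & I_n \end{pmatrix},$$
which is a one-line block multiplication. Together with the first step this presents $A_1$ as a product of three elements of $E_{2n}(R)$, so $A_1\in E_{2n}(R)$. Finally, since $E_{2n}(R)$ is a group and $A_1A_2=I_{2n}$ (equivalently $A_1^2=-I_{2n}$, whence $A_2=-A_1=A_1^3$), we also get $A_2=A_1^{-1}\in E_{2n}(R)$.

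I do not expect a real obstacle here; the statement is a standard preparatory lemma. The only mild care needed is in the first step — confirming that the chosen elementary generators genuinely multiply to the claimed block-triangular matrices — which is why I would argue via pairwise-commuting transvections supported on disjoint coordinate positions rather than expanding the product entrywise.
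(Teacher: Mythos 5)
Your proof is correct: the factorization
$\begin{pmatrix}0&I_n\\-I_n&0\end{pmatrix}=\begin{pmatrix}I_n&I_n\\0&I_n\end{pmatrix}\begin{pmatrix}I_n&0\\-I_n&I_n\end{pmatrix}\begin{pmatrix}I_n&I_n\\0&I_n\end{pmatrix}$
checks out, the block-unipotent factors are products of elementary matrices whose cross terms vanish (every generator has column index $>n$ and row index $\le n$, which is the precise reason they commute), and $A_2=A_1^{-1}$ then lies in the group $E_{2n}(R)$. The paper gives no proof of this lemma, quoting it from [4], and your argument is exactly the standard Whitehead-lemma-style computation used there, so the two approaches coincide.
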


\begin{lemma} $([9])$
Let $a,a',b\in R$ such that $aa'd-bc=1$ for any $c,d\in R$. We have
$$\left(
\begin{array}{ccc}
aa'&b&0\\
c&d&0\\
0&0&1\\
\end{array}
\right)
\equiv
\left(
\begin{array}{ccc}
a&b&0\\
c&d_1&0\\
0&0&1\\
\end{array}
\right)
\cdot
\left(
\begin{array}{ccc}
a'&b&0\\
c&d_2&0\\
0&0&1\\
\end{array}
\right)
\mod E_3(R)
$$
where $d_1=a'd,d_2=ad$.
\end{lemma}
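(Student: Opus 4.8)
The plan is to recast the asserted congruence as a membership statement in $E_3(R)$ and then to verify that statement by an explicit reduction with elementary matrices. Write $A:=aa'$, so the hypothesis is $Ad-bc=1$. Then all three matrices in the statement already lie in $\SL_3(R)$: the left-hand one has determinant $Ad-bc=1$, and the two factors on the right have determinants $a\cdot a'd-bc=Ad-bc=1$ and $a'\cdot ad-bc=Ad-bc=1$. Since $E_3(R)$ is normal in $\SL_3(R)$, the relation ``$X\equiv Y\bmod E_3(R)$'' is an equivalence relation, it is compatible with multiplication (if $X=gN$ and $X'=g'N'$ with $g,g'\in E_3(R)$ and $N,N'\in\SL_3(R)$, then $XX'(NN')^{-1}=g\,(Ng'N^{-1})\in E_3(R)$ by normality), and it holds exactly when $XY^{-1}\in E_3(R)$. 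So, letting $M_0,M_1,M_2$ denote the three matrices of the statement in that order, it is enough to prove $M_0^{-1}M_1M_2\in E_3(R)$. Each $M_i$ has the block shape $\operatorname{diag}(\gamma_i,1)$ with $\gamma_i\in\SL_2(R)$, hence $M_0^{-1}M_1M_2=\operatorname{diag}(\rho,1)$ with $\rho:=\gamma_0^{-1}\gamma_1\gamma_2\in\SL_2(R)$, and the goal becomes $\operatorname{diag}(\rho,1)\in E_3(R)$.

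Carrying out the multiplications and repeatedly substituting $bc=Ad-1$ to kill the terms in $d^2$, one obtains $\gamma_1\gamma_2=\begin{pmatrix} A(1+d)-1 & ab(1+d)\\ a'c(1+d) & Ad(1+d)-1\end{pmatrix}$; multiplying this on the left by $\gamma_0^{-1}=\begin{pmatrix} d & -b\\ -c & A\end{pmatrix}$ and simplifying once more with $bc=Ad-1$ gives $\rho=\begin{pmatrix} 1+bct & b(1+adt)\\ c(1-At) & 1+bc(1-at)\end{pmatrix}$, where $t:=(1+d)(1-a')$; the identity $\det\rho=1$ then follows directly from $Ad-bc=1$. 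The core of the proof is to show $\operatorname{diag}(\rho,1)\in E_3(R)$, and here the third coordinate is indispensable: in general $\rho$ is \emph{not} a product of $2\times2$ elementary matrices, but the ``$1$'' in position $(3,3)$ — together with the signed transposition $\operatorname{diag}\!\left(\begin{pmatrix} 0 & 1\\ -1 & 0\end{pmatrix},1\right)$, which lies in $E_3(R)$ via $E_2(R)\subseteq E_3(R)$ by Lemma 4.1 with $n=1$ — provides enough room to transform $\operatorname{diag}(\rho,1)$, by elementary row and column operations, into $\operatorname{diag}\!\left(\begin{pmatrix} 1 & q\\ r & 1+qr\end{pmatrix},1\right)=(I_3+r\,{\bf e}_{21})(I_3+q\,{\bf e}_{12})\in E_3(R)$, which concludes the argument. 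The design of this reduction is steered by the observations that $\rho_{11}-1=bct$ and $\rho_{21}=c(1-At)$ are multiples of $c$, that $\rho_{11}-1=bct$ and $\rho_{12}=b(1+adt)$ are multiples of $b$, and that $Ad-bc=1$ lets $d$ act as an inverse of $A$ modulo the ideal $(b,c)$.

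I expect the main obstacle to be precisely this elementary reduction, whose genuine content is the disentangling of the product $A=aa'$. As $R$ is arbitrary there is no division at our disposal, so the reduction must proceed by additive operations alone, and one has to pin down the sequence of operations that simultaneously strips the extraneous factors $(1+d)$ and $(1-a')$ off the off-diagonal entries of $\rho$ while pivoting on $Ad-bc=1$; in effect this re-proves, purely through elementary matrices, the multiplicativity $[\,aa',\,b\,]=[\,a,\,b\,]\,[\,a',\,b\,]$ of the Mennicke symbol, which is the classical origin of the lemma. I would keep the bookkeeping manageable by working one column at a time, using the $(3,3)$-entry as scratch space for depositing and then cancelling unwanted terms; one also has the freedom to replace the pair $(M_1,M_2)$ by $(M_1E,\,E^{-1}M_2)$ for any $E\in E_3(R)$, which leaves the product $M_1M_2$ untouched and can be used to simplify the factors before comparing with $M_0$ — though this only moves the difficulty rather than removing it. In every case the outcome is $M_0^{-1}M_1M_2\in E_3(R)$, i.e.\ the asserted congruence.
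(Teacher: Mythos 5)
There is a genuine gap, and it sits exactly where you yourself flag ``the main obstacle''. Your preliminary reductions are correct but purely formal: normality of $E_3(R)$ does reduce the claim to $M_0^{-1}M_1M_2\in E_3(R)$, and your computation of $\rho=\gamma_0^{-1}\gamma_1\gamma_2$ checks out entry by entry (with $t=(1+d)(1-a')$). But the whole content of the lemma is the step you then defer: actually exhibiting $\operatorname{diag}(\rho,1)$ as a product of elementary matrices. You assert that the spare third coordinate ``provides enough room'' to reach the form $\bigl(\begin{smallmatrix}1&q\\ r&1+qr\end{smallmatrix}\bigr)$ by row and column operations, but the existence of such a sequence of operations \emph{is} the statement $\operatorname{diag}(\rho,1)\in E_3(R)$ that has to be proved; no sequence is produced, and the divisibility observations ($b$ and $c$ divide $\rho_{11}-1$, etc.) do not by themselves yield one. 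For a general $\rho\in \SL_2(R)$ the matrix $\operatorname{diag}(\rho,1)$ need \emph{not} lie in $E_3(R)$ (e.g.\ $R=\mathbb{R}[x,y]/(x^2+y^2-1)$), so the special shape of this $\rho$ must be exploited in an essential, non-routine way --- this is precisely the Mennicke-symbol multiplicativity, as you note. Since the paper itself gives no argument (the lemma is quoted from [9]), your text as it stands is a plan plus a correct but inconsequential computation, not a proof.

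For comparison, the classical argument avoids your ``multiply first, then reduce $\rho$'' route and is short: conjugate $M_1$ by $P=\operatorname{diag}(1,\sigma)\in E_3(R)$, $\sigma=\bigl(\begin{smallmatrix}0&1\\-1&0\end{smallmatrix}\bigr)$, which by normality does not change its class and moves its $2\times2$ block into rows and columns $1,3$, giving $\widetilde M_1=\bigl(\begin{smallmatrix}a&0&-b\\ 0&1&0\\ -c&0&a'd\end{smallmatrix}\bigr)$. Then $\widetilde M_1M_2=\bigl(\begin{smallmatrix}aa'&ab&-b\\ c&ad&0\\ -ca'&-cb&a'd\end{smallmatrix}\bigr)$, and adding $a$ times column $3$ to column $2$ turns the $(3,2)$ entry into $aa'd-bc=1$ while killing the $(1,2)$ entry; with this unit pivot, one row operation, two column operations and a right multiplication by $\operatorname{diag}(1,\sigma)$ (all in $E_3(R)$) reduce the product to $\bigl(\begin{smallmatrix}aa'&b&0\\ c(1+aa'd)&aa'd^{2}&0\\ 0&0&1\end{smallmatrix}\bigr)$, which equals $(I_3+cd\,{\bf e}_{21})\,M_0$, whence $M_1M_2\equiv M_0 \bmod E_3(R)$. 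The decisive idea you are missing is to use the third coordinate \emph{before} multiplying, so that the hypothesis $aa'd-bc=1$ materializes as a unit entry one can pivot on; without some such device, the reduction of your explicit $\rho$ remains an unproved assertion.
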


In fact, if we transpose $aa'$ and $b,c,d$, the conclusion still holds. For the convenience of the reader, we give the following lemma.

\begin{lemma} $([7], [9])$
Let $a,a',b \in R$ such that $aa'd-bc=1$ for any $c,d\in R$. Then
$$\left(
\begin{array}{ccc}
d&b&0\\
c&aa'&0\\
0&0&1\\
\end{array}
\right)
\equiv
\left(
\begin{array}{ccc}
d_1&b&0\\
c&a&0\\
0&0&1\\
\end{array}
\right)
\cdot
\left(
\begin{array}{ccc}
d_2&b&0\\
c&a'&0\\
0&0&1\\
\end{array}
\right)
\mod E_3(R)
$$
where $d_1=a'd,d_2=ad$.
\end{lemma}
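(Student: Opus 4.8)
The plan is to obtain the lemma directly from Lemma 4.2 by exploiting a product-reversing symmetry of $\SL_3(R)$, avoiding any fresh computation. Let $J$ be the $3\times 3$ permutation matrix interchanging the first two coordinates and fixing the third, and set $\phi(M)=JM^{\mathrm{T}}J$ for any $3\times 3$ matrix $M$ over $R$. Since $J=J^{\mathrm{T}}=J^{-1}$, the map $\phi$ is the composite of transposition with conjugation by $J$; hence it is an anti-homomorphism, $\phi(MN)=\phi(N)\phi(M)$, it satisfies $\det\phi(M)=\det M$, and $\phi(I_3+a\mathbf{e}_{ij})=I_3+a\mathbf{e}_{\sigma(j)\,\sigma(i)}$ with $\sigma=(1\,2)$. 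Thus $\phi$ restricts to an order-reversing bijection of $\SL_3(R)$ with $\phi(E_3(R))=E_3(R)$, so that whenever $A=E\cdot B\cdot F$ with $E,F\in E_3(R)$ we get $\phi(A)=\phi(F)\cdot\phi(B)\cdot\phi(E)$. In other words, $\phi$ preserves congruence modulo $E_3(R)$, at the cost of reversing the order of any product appearing on the right-hand side.

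Next I would record what $\phi$ does to the block matrices at hand. A one-line computation gives
\[
\phi\begin{pmatrix}p&q&0\\ r&s&0\\ 0&0&1\end{pmatrix}=\begin{pmatrix}s&q&0\\ r&p&0\\ 0&0&1\end{pmatrix},
\]
that is, $\phi$ merely interchanges the two diagonal entries of the upper-left $2\times 2$ block and leaves $q$ and $r$ fixed. Hence $\phi$ carries the left-hand matrix of Lemma 4.2 to the left-hand matrix of the present lemma; it carries the first right-hand factor of Lemma 4.2 (diagonal entries $a$ and $d_1=a'd$) to the matrix with diagonal entries $a'd$ and $a$, and the second right-hand factor (diagonal entries $a'$ and $d_2=ad$) to the matrix with diagonal entries $ad$ and $a'$. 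These two images are exactly the two right-hand factors appearing in the statement to be proved.

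To finish, I would apply Lemma 4.2 with the roles of $a$ and $a'$ interchanged — legitimate since $a'a=aa'$ and the hypothesis $aa'd-bc=1$ is symmetric in $a$ and $a'$ — which yields
\[
\begin{pmatrix}aa'&b&0\\ c&d&0\\ 0&0&1\end{pmatrix}\equiv\begin{pmatrix}a'&b&0\\ c&ad&0\\ 0&0&1\end{pmatrix}\begin{pmatrix}a&b&0\\ c&a'd&0\\ 0&0&1\end{pmatrix}\mod E_3(R).
\]
Applying $\phi$ to both sides and using that $\phi$ reverses the product gives
\[
\begin{pmatrix}d&b&0\\ c&aa'&0\\ 0&0&1\end{pmatrix}\equiv\begin{pmatrix}a'd&b&0\\ c&a&0\\ 0&0&1\end{pmatrix}\begin{pmatrix}ad&b&0\\ c&a'&0\\ 0&0&1\end{pmatrix}\mod E_3(R),
\]
which is exactly the assertion of the lemma, with $d_1=a'd$ and $d_2=ad$. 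I do not anticipate a genuine obstacle: the one thing requiring care is the bookkeeping of the reversal — since the transpose reverses products, Lemma 4.2 must be invoked with the $a$-factor and the $a'$-factor in the opposite order so that the two reversals cancel — and the single structural input is that $\phi$ stabilizes $E_3(R)$, which is immediate from the formula for $\phi(I_3+a\mathbf{e}_{ij})$. Alternatively, one could bypass $\phi$ entirely and re-derive the identity by the same elementary row and column operations that prove Lemma 4.2 in [9]; the symmetry argument simply renders that unnecessary.
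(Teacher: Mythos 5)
Your proof is correct, and it is essentially the route the paper intends: the paper gives no argument for this lemma beyond the remark that ``if we transpose $aa'$ and $b,c,d$, the conclusion still holds'' together with citations to [7], [9], and your anti-automorphism $\phi(M)=JM^{\mathrm{T}}J$ is precisely a rigorous implementation of that transposition remark. The one point the paper glosses over --- that transposition reverses products, so Lemma 4.2 must be invoked with $a$ and $a'$ interchanged for the factors to come out in the stated order --- is exactly the point you handle correctly, using normality of $E_3(R)$ in $\SL_3(R)$ so that $\phi$ descends to the quotient.
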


\begin{lemma} $([10])$\quad Let  $b,y\in R$. If $b\cdot y$ is nilpotent, then $1\in \langle b,y\rangle \Leftrightarrow 1\in \langle b+y\rangle$.
\end{lemma}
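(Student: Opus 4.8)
The plan is to treat the two implications separately.

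The implication ``$1\in\langle b+y\rangle \Rightarrow 1\in\langle b,y\rangle$'' is immediate and does not use the nilpotence hypothesis: since $b+y\in\langle b,y\rangle$ we have $\langle b+y\rangle\subseteq\langle b,y\rangle$, so if the smaller ideal is all of $R$ then so is the larger one.

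For the converse ``$1\in\langle b,y\rangle \Rightarrow 1\in\langle b+y\rangle$'' I would pass to the quotient ring $\bar R:=R/\langle b+y\rangle$ and show $\bar R=0$. In $\bar R$ one has $\bar b=-\bar y$, hence $\overline{by}=-\bar y^{\,2}$; since $by$ is nilpotent in $R$, its image is nilpotent in $\bar R$, so $\bar y^{\,2}$ is nilpotent, whence $\bar y$ and $\bar b=-\bar y$ are both nilpotent. Writing $1=ub+vy$ with $u,v\in R$ (possible because $1\in\langle b,y\rangle$) and reducing modulo $\langle b+y\rangle$ gives $\bar 1=\bar u\,\bar b+\bar v\,\bar y$, a sum of two nilpotent elements of a commutative ring, hence nilpotent. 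Thus $\bar 1$ is nilpotent in $\bar R$, which forces $\bar R=0$, i.e.\ $\langle b+y\rangle=R$, as desired.

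I expect no serious obstacle here; the only ingredients are the binomial-theorem fact that a sum of nilpotents is nilpotent and the observation that reducing modulo $b+y$ makes both generators $b$ and $y$ nilpotent. If a quotient-free argument is preferred, the same computation can be made explicit: from $1=ub+vy=v(b+y)+(u-v)b=u(b+y)+(v-u)y$ one obtains, for $N$ large, $1\equiv (u-v)^N b^N$ and $1\equiv (v-u)^N y^N$ modulo $\langle b+y\rangle$ by expanding the $N$-th powers (every cross term carries a factor of $b+y$), and multiplying the two congruences yields $1\equiv (-1)^N(u-v)^{2N}(by)^N\equiv 0$ as soon as $N$ is at least the nilpotency index of $by$.
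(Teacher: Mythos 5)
Your proof is correct. Note that the paper itself offers no argument for this lemma; it simply cites it from reference [10], so there is nothing internal to compare against. Both of your versions are sound: in the quotient $R/\langle b+y\rangle$ the images of $b$ and $y$ become nilpotent because $\bar b=-\bar y$ and $\overline{by}=-\bar y^{\,2}$ is nilpotent, so $\bar 1=\bar u\bar b+\bar v\bar y$ is nilpotent and the quotient collapses; and the explicit computation $1\equiv(u-v)^N b^N$, $1\equiv(v-u)^N y^N$ modulo $\langle b+y\rangle$, multiplied together with $N$ at least the nilpotency index of $by$, gives the same conclusion without passing to the quotient. The easy direction indeed needs no hypothesis, as you say.
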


\begin{lemma} $([5])$\quad Let $R$ be a commutative local ring, and let
$$M=
\left(
\begin{array}{ccc}
f(x)&g(x)&0\\
p(x)&q(x)&0\\
0&0&1\\
\end{array}
\right)
\in \SL_3(R[x])$$
Assume that there is at least one monic polynomial among $f(x),g(x),p(x),q(x)$. Then $M \in E_3(R[x])$.
\end{lemma}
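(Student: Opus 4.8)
The plan is to carry $M$ to $I_3$ by a chain of left and right multiplications by elements of $E_3(R[x])$, arguing by induction on the degree of the monic entry. First some free normalizations: transposition preserves $E_3(R[x])$, and interchanging the first two rows, or the first two columns, with a sign is realized by $\bigl(\begin{smallmatrix}0&-1\\1&0\end{smallmatrix}\bigr)\oplus 1\in E_2(R[x])\oplus 1\subseteq E_3(R[x])$ (Lemma 4.1, case $n=1$); choosing signs so the leading coefficient stays $1$, we may assume the monic polynomial is $f$, in position $(1,1)$. Put $d=\deg f$; since $\det M=1$ we have $fq-gp=1$. If $d=0$ then $f$ is a unit: a row operation kills $p$, a column operation kills $g$, and $M$ becomes $\operatorname{diag}(f,f^{-1})\oplus 1\in E_3(R[x])$ by Lemma 4.1. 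So assume $d\ge 1$ and that the statement holds whenever the monic entry has degree $<d$.

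Next, normalize inside the block. Dividing $g$ and $p$ by the monic polynomial $f$ (a column operation $C_2\mapsto C_2-{*}\,C_1$ and a row operation $R_2\mapsto R_2-{*}\,R_1$, both in $E_3(R[x])$) we may assume $\deg g<d$ and $\deg p<d$; then $fq=1+gp$ forces $\deg q\le d-2$. Evaluating $fq-gp=1$ at $x=0$ and using that $R$ is local, one of $f(0),g(0)$ is a unit; if it is $g(0)$, the column operation $C_1\mapsto C_1-cC_2$ with the constant $c=(f(0)-1)g(0)^{-1}\in R$ makes the constant term of the $(1,1)$-entry a unit while keeping it monic of degree $d$. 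So we may also assume $f(0)$ is a unit.

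The inductive step is a Euclidean-type descent on $d$. Examine the leading coefficients in $R$ of $g$ and $p$. If one of them, say that of $g$, is a unit $u$, then $u^{-1}g$ is monic of degree $\deg g<d$: right-multiplying by $\operatorname{diag}(1,u^{-1},u)\in E_3(R[x])$ (Lemma 4.1), subtracting a suitable multiple of the new second column from the first to pull the degree of the $(1,1)$-entry below $d$, interchanging columns $1$ and $2$, and finally restoring the third column to $(0,0,1)^{T}$ by the factor $\operatorname{diag}(1,u,u^{-1})$, we obtain a matrix of the same block shape whose monic entry $u^{-1}g$ has degree $<d$, and the inductive hypothesis applies. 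The remaining possibility is that the leading coefficients of both $g$ and $p$ lie in $\mathfrak m$; reducing $fq-gp=1$ modulo $\mathfrak m$ and comparing degrees in $k[x]$, $k=R/\mathfrak m$, then forces the constant terms $g(0),p(0)$ to be units. In particular $\deg g\ge 1$ and $g(0)$ is a unit, so $g$ is a legitimate choice of the polynomial $e(x)$ in Lemmas 3.3 and 3.5. By Lemma 3.5 with divisor $g$, $f\equiv x^{k}h_k\pmod{gR[x]}$ with $\deg h_k<\deg g$; a column operation $C_1\mapsto C_1-{*}\,C_2$ places $x^{k}h_k$ at position $(1,1)$, and then — after interchanging columns so that (a sign times) $g$ occupies the last coordinate — Lemma 3.3(2) strips the factor $x^{k}$ off the first row, leaving the $(1,1)$-entry of degree $<\deg g\le d-1$. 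Restoring the block form $\operatorname{diag}(\,\cdot\,,1)$ and re-normalizing to display a monic entry of degree $<d$, we descend again.

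The whole substance is in the last case. Over $R$ one cannot divide with remainder by a polynomial whose leading coefficient is not a unit, so the reduction must be run ``from the constant-term end'', which is exactly why forcing $f(0)$ — and, there, $g(0)$ — to be units is the crucial gain, and why Lemmas 3.3 and 3.5, which genuinely use the third coordinate of the ambient $\SL_3$ to trade powers of $x$ against a polynomial with unit constant term, are the decisive tools (Lemma 4.4 being used to push a stripped row back into standard position when a localization is involved). I expect the hardest bookkeeping to be verifying that, after the stripping of Lemma 3.3(2), the block structure $\operatorname{diag}(\,\cdot\,,1)$ really can be recovered by further $E_3(R[x])$-operations and that the degree of the monic entry strictly decreases at each pass — i.e., that the induction terminates; this is the point at which one either grinds through the explicit matrix identities of the cited work or sets the induction up on a suitably enlarged class of matrices in $\SL_3(R[x])$.
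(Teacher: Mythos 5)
First, note the paper does not prove this lemma at all: it is quoted from [5], so the only fair comparison is with the standard Horrocks/Suslin-type argument it refers to. Measured against that, your sketch has a genuine gap at exactly the point where the real work lies, and one of your key claims is false. In your ``remaining possibility'' (leading coefficients of both $g$ and $p$ in $\mathfrak m$) you assert that reducing $fq-gp=1$ mod $\mathfrak m$ and comparing degrees forces $g(0),p(0)$ to be units. This is not so once $d\ge 4$. Take $R=\mathbb{Q}[t]_{(t)}$, $f=x^4+1$ (monic, $f(0)=1$), $g=tx^3+x^2$. Then $(f,g)$ is unimodular, and the reduced inverse of $-g$ mod $f$ is $p=\frac{1}{1+t^4}\left(-tx^3+x^2+t^3x-t^2\right)$, with $q=(1+gp)/f$ of degree $2$. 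All your normalizations hold ($f$ monic with unit constant term, $\deg g,\deg p<d$, $\deg q\le d-2$), both leading coefficients lie in $\mathfrak m$, and yet $g(0)=0$ and $p(0)=-t^2/(1+t^4)\in\mathfrak m$. So neither $g$ nor $p$ qualifies as the polynomial $e(x)$ of Lemmas 3.3/3.5, and your descent stalls precisely in the hard case; your Euclidean case (a unit leading coefficient among $g,p$) is fine but is the easy half.

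The second gap is the one you flag yourself and defer as ``bookkeeping'': Lemmas 3.3 and 3.5 are statements about unimodular \emph{rows}, and right-multiplying $M$ by the elementary matrix realizing such a row move destroys the third row $(0,0,1)$ and the block shape $\mathrm{diag}(\ast,1)$. Recovering that shape is not routine; it is the whole content of the lemma. Indeed, for any bordered matrix $\left(\begin{smallmatrix} a&b&0\\ c&d&0\\ 0&0&1\end{smallmatrix}\right)\in \SL_3$ the first row $(a,b,0)$ is always elementarily equivalent to $(1,0,0)$, yet membership of the matrix in $E_3$ is exactly the nontrivial assertion (and is false without the local/monic hypotheses, e.g.\ for Mennicke-symbol reasons). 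This is why the literature, and this paper's own analogous matrix arguments (Lemmas 4.6, 4.7 and Theorem 4.1), run the induction through factorization identities of the type of Lemmas 4.2--4.3, which split a product entry across two block matrices while preserving the $\mathrm{diag}(\ast,1)$ form; nothing playing that role appears in your sketch, and the aside about Lemma 4.4 (a statement about nilpotents) does not supply it. To repair the proof you would need either those factorization identities or a Horrocks-type argument over $R[x]/(f)$, i.e.\ essentially the proof in [5].
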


Let $V$ be a valuation ring, and
$$M=\left(
      \begin{array}{ccc}
        f(x) & g(x) & 0 \\
        p(x) & q(x) & 0 \\
        0 & 0 & 1 \\
      \end{array}
    \right)\in \SL_3(V[x])
 $$
Observe that $f(0)q(0)-g(0)p(0)=1$. since $V$ is local, we may assume that $g(0)$ and $p(0)$ are units (in this case, $f(0)$ or $q(0)$ is not a unit). When $\deg g(x) \leq \deg p(x)$ (or $\deg p(x) \leq \deg g(x)$), we say that $g(x)$ (or $p(x)$) is a minimal entry of $M$, denoted by $M(m)$. When $\deg M(m) = 0$, $M(m)$ is a unit, and it is obvious that $M \in E_3(V[x])$. By lemma 4.1, we may assume that $g(x)$ is a minimal entry of $M$. Let
$$\begin{aligned}
\SL_3(V[x])_n \triangleq \{&M\in \SL_3(V[x])\mid \exists M_1,\cdots,M_t\ s.t.\ \deg M_i(m)\leq n \\
 &i=1,\cdots,t,\ and\ M\equiv M_1\cdots M_t \mod \ E_3(V[x])\}
 \end{aligned}$$
Obviously, $\SL_3(V[x])_n$ is a multiplicative semigroup, $\SL_3(V[x])_0 = E_3(V[x])$, and $E_3(V[x])\subseteq \SL_3(V[x])_n$ for any positive integer $n$.

\begin{lemma}
Let $V$ be a valuation ring, $M(m)=g(x)$, and
 $$M=\left(
       \begin{array}{ccc}
         f(x) & g(x) & 0 \\
         p(x) & q(x) & 0 \\
         0 & 0 & 1 \\
       \end{array}
     \right)
 \in \SL_3(V[x])_n,$$
 where $f(x)$ is primitive and $\deg f(x) < \deg g(x)$. Then $M \in \SL_3(V[x])_{n-1}$.
 \end{lemma}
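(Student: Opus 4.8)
The plan is to reduce $M$ by elementary operations (allowing also the swap of Lemma 4.1 and the Whitehead matrices $\mathrm{diag}(u,u^{-1},1)\in E_3(V)$) either to a matrix in block form whose minimal entry has degree $\le n-1$, or to a matrix of the shape in Lemma 4.5 having a monic entry, the latter then being absorbed by Lemma 4.5. We may assume $\deg g=n$: if $\deg g\le n-1$ then already $M\in\SL_3(V[x])_{\deg g}\subseteq\SL_3(V[x])_{n-1}$. From $\det M=fq-gp=1$ one reads off that $fq\equiv 1\pmod{gV[x]}$; that $g$ and $p$ are primitive (their contents divide the units $g(0)$ and $p(0)$); that the top-degree terms of $fq$ and $gp$ must cancel, so $\deg f+\deg q=\deg g+\deg p$ and hence $\deg q=n+(\deg p-\deg f)>\deg p\ge n$; and that $\mathrm{lc}(f)\,\mathrm{lc}(q)=\mathrm{lc}(g)\,\mathrm{lc}(p)$ for the leading coefficients. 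Put $t=\deg f<n$.

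Next I would dispose of two easy configurations. (A) If one of $f,g,p,q$ has a unit leading coefficient, then after possibly composing with the swap of Lemma 4.1 to carry that entry into the first row, and then right-multiplying by a suitable $\mathrm{diag}(u,u^{-1},1)$, the matrix becomes (modulo $E_3(V[x])$) one of the form in Lemma 4.5 with a monic entry; by Lemma 4.5, valid since $V$ is local, such a matrix lies in $E_3(V[x])$, so $M\in E_3(V[x])=\SL_3(V[x])_0\subseteq\SL_3(V[x])_{n-1}$. We may therefore assume all four leading coefficients are non-units. (B) If $f(0)$ is a unit, apply the swap of Lemma 4.1 to move $f$ into the $(2,1)$-slot; a preliminary elementary row operation with a \emph{constant} multiplier $h_0$, chosen so that $q(0)-h_0g(0)$ is a unit (possible because $g(0)$ is a unit), restores the normalization, and the resulting block-form matrix has minimal entry $\pm f$ of degree $t<n$ (the new $(1,2)$-entry $q-h_0g$ still has degree $\deg q>n$). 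Hence $M\in\SL_3(V[x])_{t}\subseteq\SL_3(V[x])_{n-1}$, and we may assume in addition that $f(0)$ is a non-unit; since $f$ is primitive its highest unit coefficient then occurs at a position $m$ with $0<m<t$.

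There remains the case in which all of $\mathrm{lc}(f),\mathrm{lc}(g),\mathrm{lc}(p),\mathrm{lc}(q)$ are non-units, $f(0)$ is a non-unit, and $0<m<t<n$. As $V$ is a valuation ring, $\mathrm{lc}(f)$ and $\mathrm{lc}(g)$ are comparable under division. If $\mathrm{lc}(f)\mid\mathrm{lc}(g)$, the column operation $\mathrm{col}_2\mapsto\mathrm{col}_2-\bigl(\mathrm{lc}(g)/\mathrm{lc}(f)\bigr)x^{\,n-t}\,\mathrm{col}_1$ kills the leading term of the $(1,2)$-entry while, because $n-t\ge 1$, leaving its constant term equal to $g(0)$; so the minimal entry now has degree $\le n-1$ and $M\in\SL_3(V[x])_{n-1}$. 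Otherwise $\mathrm{lc}(g)$ properly divides $\mathrm{lc}(f)$, and here I would mimic the inductive device from the proof of Theorem 3.1: by combining the $f$-slot polynomial with $g$, and adjusting $p$ and $q$ accordingly via the leading-coefficient relation, relocate the highest unit coefficient of the $f$-slot polynomial one step nearer its leading position, iterating until that polynomial becomes unitary, whereupon Lemma 4.5 finishes.

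The escapes (A), (B) and the case $\mathrm{lc}(f)\mid\mathrm{lc}(g)$ are bookkeeping; the crux — and where I expect the real work — is the remaining sub-case, with $\mathrm{lc}(g)$ properly dividing $\mathrm{lc}(f)$ and every leading coefficient a non-unit. In Theorem 3.1 the "relocate the unit coefficient" move was carried out on a \emph{unimodular row} and used the freedom to multiply a coordinate by an arbitrary power of $x$ — legitimate there because $x$ is a unit modulo the controlling polynomial (Lemma 3.3(1)). For an $\SL_3$-matrix, multiplying a column by $x^k$ destroys the determinant, so the move must be realised by an honest product of generators $I_3+a\,\mathbf{e}_{ij}$. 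The natural remedy is to interleave the column and row operations with the splitting identities of Lemmas 4.2 and 4.3, which trade a factor $x^k$ for a passage to a product of two matrices of no larger minimal-entry degree; one then has to verify that every intermediate matrix stays inside $\SL_3(V[x])_n$ and that the iteration terminates — either with a unitary $f$-slot polynomial, or with the minimal entry's degree having fallen below $n$. Making this termination-and-degree bookkeeping precise is the substance of the proof.
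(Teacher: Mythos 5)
Your handling of the peripheral cases is essentially sound: the escape (A) via a Whitehead diagonal matrix plus Lemma 4.5, the escape (B) via the swap (the minimal entry after the swap has degree $\le\min(t,\deg(q-h_0g))\le t\le n-1$ no matter what), and the case $\mathrm{lc}(f)\mid\mathrm{lc}(g)$, where your single column operation $C_2\mapsto C_2-(\mathrm{lc}(g)/\mathrm{lc}(f))x^{\,n-t}C_1$ indeed drops the minimal-entry degree below $n$ (this is in substance the paper's case (1), even a bit more direct). But the decisive sub-case --- $\mathrm{lc}(g)$ properly dividing $\mathrm{lc}(f)$ with all leading coefficients non-units --- is exactly where you stop: you name the right tools (the splittings of Lemmas 4.2/4.3, Lemma 4.5, the unit-relocation device of Theorem 3.1) but explicitly defer ``the termination-and-degree bookkeeping,'' and that bookkeeping \emph{is} the paper's proof. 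Concretely, what is missing: one first applies Lemma 3.5 to replace $q(x)$ by $x^{s}q_1(x)$ modulo $g(x)$ (an honest elementary row operation, since $g$ sits above $q$ in the same column) with $\deg q_1\le n-1$ and $s\ge n-l$; only then can Lemma 4.3 factor $M$, modulo $E_3(V[x])$, into one matrix carrying $x^{n-l}f(x)$ in the $(1,1)$-slot and a second whose $(2,2)$-entry is the monic $x^{n-l}$, the second factor being absorbed by Lemma 4.5. After that, the column operation $C_1\mapsto C_1-c\,C_2$ with $c=\mathrm{lc}(f)/\mathrm{lc}(g)$ (a non-unit) produces $f_1=x^{n-l}f-cg$, which is again primitive, has degree $\le n-1$, and whose unit coefficient sits at position $m+n-l>m$; since the gap between the leading position and the unit position strictly shrinks while the degree stays below $n$, after at most $n-m$ iterations the leading coefficient of some $f_i$ is a unit and Lemma 4.5 (with, if needed, case (1)) finishes. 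None of this construction or termination argument appears in your text, so the proof is incomplete precisely at its crux.

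A secondary problem: your preliminary degree bookkeeping tacitly assumes $V$ is a domain. The valuation rings in this paper may have zero divisors and nilpotents (the whole proof of Lemma 4.7 is about such phenomena), so from $fq-gp=1$ you may not conclude that the top terms cancel, hence neither $\deg f+\deg q=\deg g+\deg p$, nor $\deg q>\deg p$, nor $\mathrm{lc}(f)\mathrm{lc}(q)=\mathrm{lc}(g)\mathrm{lc}(p)$. These relations are dispensable for your escapes, but your stated plan to ``adjust $p$ and $q$ via the leading-coefficient relation'' in the crux case would lean on them; the paper avoids all such relations by touching $q$ only modulo $g$ through Lemma 3.5, and you should do the same when you fill in the main case.
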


\begin{proof}
Let
  $$f(x)=a_lx^l+\cdots+a_mx^m+\cdots+a_0$$
  $$g(x)=b_nx^n+b_{n-1}x^{n-1}+\cdots+b_0$$
where $a_l\neq0$, $b_n\neq0$, $l<n$, $a_m$ and $b_0$ are units.

(1) Suppose that $a_l \mid b_n$. Let $b_n=c \cdot a_l$ and $g_1(x)=g(x)-cx^{n-l}f(x)$. Then $g_1(0)=g(0)$ is a unit, and $\deg g_1(x) \leq n-1$. By Lemma 3.5, for any $s \geq \deg q(x)- \deg g(x)+1$, there is $q_1(x)$ such that $q(x) \equiv x^sq_1(x) \mod g(x)V[x]$, and $\deg q_1(x) \leq n-1$.

We set $s \geq n-l$. By Lemmas 4.3 and 4.5,
  $$\begin{aligned}
  M&\equiv\left(
                  \begin{array}{ccc}
                    f(x) & g(x) & 0 \\
                    p_1(x) & x^sq_1(x) & 0 \\
                    0 & 0 & 1 \\
                  \end{array}
                \right)=\left(
                  \begin{array}{ccc}
                    f(x) & g(x) & 0 \\
                    p_1(x) & x^{n-l}\cdot x^{s-n+l}q_1(x) & 0 \\
                    0 & 0 & 1 \\
                  \end{array}
                \right)\\
                &\equiv \left(
                                \begin{array}{ccc}
                                  x^{n-l}f(x) & g(x) & 0 \\
                                  p_1(x) & x^{s-n+l}q_1(x) & 0 \\
                                  0 & 0 & 1 \\
                                \end{array}
                              \right)\cdot\left(
                                            \begin{array}{ccc}
                                              x^{s-n+l}q_1(x)f(x) & g(x) & 0 \\
                                              p_1(x) & x^{n-l} & 0 \\
                                              0 & 0 & 1 \\
                                            \end{array}
                                          \right)\\
                                          &\equiv\left(
                                                         \begin{array}{ccc}
                                                           x^{n-l}f(x) & g(x) & 0 \\
                                                           p_1(x) & x^{s-n+l}q_1(x) & 0 \\
                                                           0 & 0 & q \\
                                                         \end{array}
                                                       \right)\equiv\left(
                                                                      \begin{array}{ccc}
                                                                        x^{n-l}f(x) & g_1(x) & 0 \\
                                                                        p_1(x) & q_2(x) & 0 \\
                                                                        0 & 0 & 1 \\
                                                                      \end{array}
                                                                    \right)
                                                                    \\&\triangleq M_1\ \mod\ (V[x])
                \end{aligned}$$
where $q_2(x)=x^{s-n+1}q_1(x)-cp_1(x)$. Obviously, $p_1(0)$ is necessarily a unit and $\deg M_1(m) \leq \deg g_1(x) \leq n-1$, so $M_1 \in \SL_3(V[x])_{n-1}$ and $M \in \SL_3(V[x])_{n-1}$.

(2) If $a_l \nmid b_n$, then $b_n \mid a_l$ and $a_l=b_n\cdot c$, where $c$ is not a unit. Let $f_1(x)=x^{n-l}f(x)-c\cdot g(x)$. Then
  $$\begin{aligned}
  M&\equiv\left(
             \begin{array}{ccc}
               x^{n-l}f(x) & g(x) & 0 \\
               p_1(x) & x^{s-n+l}q_1(x) & 0 \\
               0 & 0 & 1 \\
             \end{array}
           \right)\\
  &\equiv \left(
             \begin{array}{ccc}
               f_1(x) & g(x) & 0 \\
               p_2(x) & x^{s-n+l}q_1(x) & 0 \\
               0 & 0 & 1 \\
             \end{array}
           \right)\mod E_3(V[x])
           \end{aligned}
  $$
 Where $f_1(x)=(a_{l-1}-cb_{n-1})x^{n-1}+\cdots+(a_m-cb_{m+n-l})x^{m+n-l}+\cdots+(-cb_0)$. Since $c$ is not a unit, $a_m-cb_{m+n-l}$ is a unit, namely the degree of the term where the coefficient is a unit in $f_1(x)$ is greater than that of $f(x)$ by one (since $n-l \geq 1$), $\deg f_1(x) < n$, and $f_1(x)$ is also primitive. If $b_n$ can be divided by the leading coefficient of $f_1(x)$, then by case (1), $M\in \SL_3(V[x])_{n-1}$. Otherwise, by repeating the preceding procedure, we obtain $f_i(x)$ such that the leading coefficient of $f_i(x)$ ($i\leq n-m$) will be a unit, and
 $$M\equiv\left(
            \begin{array}{ccc}
              f_1(x) & g(x) & 0 \\
              p_2(x) & x^{s-n+l}q_1(x) & 0 \\
              0 & 0 & 1 \\
            \end{array}
          \right)\equiv\left(
                         \begin{array}{ccc}
                           f_i(x) & g(x) & 0 \\
                        \widetilde{p(x)}& \widetilde{q(x)} & 0 \\
                           0 & 0 & 1 \\
                         \end{array}
                       \right) \mod E_3(V[x])
 $$
By lemma 4.5, $M\in E_3(R[x])$, and certainly $M\in \SL_3(V[x])_{n-1}$.
\end{proof}

\begin{lemma}
Let $V$ be a valuation ring, $M(m)=g(x)$, and
  $$M=\left(
        \begin{array}{ccc}
          a & g(x) & 0 \\
          p(x) & q(x) & 0 \\
          0 & 0 & 1 \\
        \end{array}
      \right)\in \SL_3(V[x])_n,
  $$
  where $a\in V$. Then $M\in \SL_3(V[x])_{n-1}$
\end{lemma}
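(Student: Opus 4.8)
The plan is to reduce to Lemma~4.6 by a case analysis on the corner entry $a$. Writing $\det M=1$ as $aq(x)-g(x)p(x)=1$ and normalising (as in the paragraph preceding Lemma~4.6) so that $g(0),p(0)$ are units and $\deg g(x)\le\deg p(x)$, I may assume $\deg g(x)=n\ge 1$: if $\deg g(x)<n$, then $M$ itself is a one-term product exhibiting $M\in\SL_3(V[x])_{n-1}$. Put $g(x)=b_nx^n+\cdots+b_1x+b_0$. If $a$ is a unit, then $a$ is primitive with $\deg a=0<\deg g(x)$, so Lemma~4.6 applies verbatim and $M\in\SL_3(V[x])_{n-1}$. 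So assume $a\in\mathfrak m$ (the maximal ideal of $V$); here $a\ne 0$, since $a=0$ would force $g(x)p(x)=-1$, i.e.\ $\deg g(x)=0$. Reducing $aq-gp=1$ modulo $\mathfrak m$ gives $\bar g(x)\,\bar p(x)=-1$ in $(V/\mathfrak m)[x]$, so $\bar g(x)$ and $\bar p(x)$ are nonzero constants; in particular $b_1,\dots,b_n\in\mathfrak m$.

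For $a\in\mathfrak m$ I would imitate the proof of Lemma~4.6, with $x^na$ in place of $x^{n-l}f$. First reduce $q$ modulo $g$: by Lemma~3.5, for large $s$ there is $q_1(x)$ with $\deg q_1(x)<n$ and $q(x)\equiv x^sq_1(x)\bmod g(x)V[x]$; subtracting a polynomial multiple of the first row from the second replaces $M$ by an $E_3(V[x])$-equivalent matrix with second row $(p_1(x),x^sq_1(x),0)$ and $a\,x^sq_1(x)-g(x)p_1(x)=1$. Writing $x^sq_1=(x^{s-n}q_1)\cdot x^n$ and using Lemma~4.3, this matrix factors modulo $E_3(V[x])$ as $N_1N_2$, where $N_1$ has first row $(x^na,g,0)$ and second row $(p_1,x^{s-n}q_1,0)$, and $N_2$ carries the monic polynomial $x^n$ in position $(2,2)$; by Lemma~4.5, $N_2\in E_3(V[x])$, so it suffices to put $N_1$ into $\SL_3(V[x])_{n-1}$. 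Now split on divisibility in $V$. If $a\mid b_n$, write $b_n=ca$; then $C_2\mapsto C_2-c\,C_1$ turns the $(1,2)$-entry of $N_1$ into $g-b_nx^n$, of degree $\le n-1$ with unit constant term, and (as $\deg p_1$ is large, forced by $\det N_1=1$) this is the minimal entry, so $N_1\in\SL_3(V[x])_{n-1}$. If instead $b_n\mid a$, write $a=tb_n$ with $t$ a non-unit; then $x^na-tg=-t(b_0+\cdots+b_{n-1}x^{n-1})=:-t\,g''(x)$ with $g''$ primitive of degree $\le n-1$, so $C_1\mapsto C_1-t\,C_2$ gives $N_1$ the corner entry $-tg''$, and factoring out $t$ by Lemma~4.2 presents $N_1$ as a product of a matrix with primitive corner entry $-g''(x)$ of degree $<n$ (hence in $\SL_3(V[x])_{n-1}$ by Lemma~4.6) and a matrix of exactly the present form but with the constant $t$, a \emph{proper} divisor of $a$, in the corner --- to which one applies the lemma recursively.

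The step I expect to be the main obstacle is the termination of that recursion when $b_n\mid a$. In Lemma~4.6 the induction is anchored by the position of the unit coefficient of the primitive polynomial $f$, which strictly increases and is bounded by $\deg f<n$; a non-unit constant $a$ carries no unit coefficient, and the naive descent replacing $a$ by $t=a/b_n$, then $a/b_n^2$, and so on, need not terminate when the value group of $V$ is non-archimedean. To close this case I would argue directly from the fact that $a,b_0,\dots,b_n$ are totally ordered by divisibility --- reducing $g$ modulo $a$ and tracking degrees, and showing that one must eventually land in the case $a\mid b_n$ or produce a monic entry (so Lemma~4.5 applies) --- or else dispose of the exceptional situation $a\in\bigcap_k b_n^kV$ separately, where $b_n$ is a unit and $g$ unitary after localising at the prime $\bigcap_k b_n^kV$, finishing by a local--global/patching argument. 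The remaining ingredients --- the determinant identities, the degree estimates, and the uses of Lemmas~3.5, 4.2, 4.3, 4.5 and~4.6 --- are routine.
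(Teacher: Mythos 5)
Your reduction to $N_1$ (clearing $q$ modulo $g$ via Lemma 3.5, splitting off the monic factor $x^n$ with Lemma 4.3 and discarding it by Lemma 4.5) and your treatment of the case $a\mid b_n$ are sound and parallel the paper's manipulations. But the whole content of the lemma sits in the case you leave open: $a$ a non-zero non-unit with $a\nmid b_n$, hence $b_n\mid a$ with non-unit quotient. Your descent replaces $a$ by $t=a/b_n$ and recurses, and, as you yourself concede, this need not terminate: if $a\in\bigcap_k b_n^kV$ (perfectly possible, e.g.\ for a valuation ring whose value group has rank $\ge 2$) you never reach the case $t\mid b_n$, and no bound on the recursion length exists. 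Neither escape route you sketch closes this. Localizing at $\bigcap_k b_n^kV$ and invoking patching cannot work as stated: $V$ is already local, Lemma 4.8 quantifies over the \emph{maximal} ideals of the base ring (for $V$ there is only one, with $V_m=V$), so knowing the image of $M$ lies in $E_3(V_{\mathfrak p}[x])$ for a non-maximal prime $\mathfrak p$ gives nothing back over $V$. And ``argue from the total ordering of $a,b_0,\dots,b_n$ under divisibility, showing one must land in $a\mid b_n$ or produce a monic entry'' is precisely the step that has to be invented; divisibility among those elements alone cannot force it, as the non-archimedean example shows.

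What the paper does at exactly this point, and what is absent from your proposal, is to bring $p$ and $q$ into play through the determinant identity rather than recursing on $a$. Writing $p=a_mx^m+\cdots$, $q=c_sx^s+\cdots$, the relation $aq-pg=1$ forces $ac_i=0$ for $i>m+n$ and $ac_{m+n}=a_mb_n$; non-nilpotence of $a$ yields $a^k\mid c_i$ for all $k$ and all $i>m+n$, whence $c_{m+n}\mid b_n$ with a non-unit cofactor $d$. From this the paper builds $u(x)$ with $q-ug=x^{m+1}q_1$, where $\deg q_1=n-1$ and the leading coefficient of $q_1$ is a unit multiple of $c_{m+n}$; since $b_n=dc_{m+n}$, one column operation then replaces $g$ by a polynomial of degree $\le n-1$ with unit constant term, giving the degree drop in a single pass. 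Note also that the paper's valuation rings need not be domains: the subcase of nilpotent $a\neq 0$ is handled separately (Lemma 4.4 gives $M\in E_3(V[x])$ outright), and it is needed because the argument above uses $a^k\neq 0$; your divisibility descent does not cover that subcase either. As it stands, your proposal establishes only the easy half of the lemma.
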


\begin{proof}
(1) If $a=0$ or $a$ is a unit, the result is obviously true.

(2) If $a$ is nilpotent, then by lemma 4.4, $1\in \langle g(x)+a \rangle$, so there exists $f(x)$ such that $f(x)\cdot (g(x)+a)=1$, and
$$\begin{aligned}M&=\left(
                    \begin{array}{ccc}
                      a & g(x) & 0 \\
                      p(x) & q(x) & 0 \\
                      0 & 0 & 1 \\
                    \end{array}
                  \right)
\equiv \left(
            \begin{array}{ccc}
              a & g(x)+a & 0 \\
              p(x) & q_1(x) & 0 \\
              0 & 0 & 1 \\
            \end{array}
          \right)\\
&\equiv\left(
      \begin{array}{ccc}
        1+a & g(x)+a & 0 \\
        p(x)+f(x)q_1(x) & q_1(x) & 0 \\
        0 & 0 & 1 \\
        \end{array}
         \right)\\&\triangleq M_1 \mod E_3(V[x])
\end{aligned}$$
Since $1+a$ is a unit, $M_1\in E_3(V[x])$, and hence $M\in E_3(V[x])\subseteq \SL_3(V[x])_{n-1}$.

 (3) If $a$ is neither a unit nor nilpotent,  we set

 $$
  \begin{aligned}
  &g(x)=b_nx^n+b_{n-1}x^{n-1}+\cdots+b_0\\
 &p(x)=a_mx^m+a_{m-1}x^{m-1}+\cdots+a_0\\
  &q(x)=c_sx^s+c_{s-1}x^{s-1}+\cdots+c_0
  \end{aligned}$$
where  $b_n\neq0$, $a_m\neq0$, $c_s\neq0$. Furthermore, by elementary transformation, we may assume that $a\nmid b_n,a_m$. By virtue of valuation rings, there exist $d_1,d_2\in V$ such that $a=a_m\cdot d_1=b_n\cdot d_2$ and $d_1,d_2$ are not units. If $a_m\cdot b_n=0$, then $a^2=a_md_1b_nd_2=a_mb_nd_1d_2=0$, so $a$ is nilpotent, a contradiction. Therefore, we can assume that $a_m\cdot b_n\neq0$. Since $aq(x)-p(x)g(x)=1$, $s\geq m+n$, and
  $$a\cdot c_s=a\cdot c_{s-1}=\cdots=a\cdot c_{m+n+1}=0,~\,~\,a\cdot c_{m+n}=a_mb_n.$$
Obviously, $ c_s, c_{s-1},\cdots, c_{m+n+1}$ are not units. 

For any $k$, $i$ ($m+n+1\leq i\leq s$), it is necessary that $a^k\mid c_i$. Otherwise, there exists $p_i\in V$ such that $a^{k}=c_i\cdot p_i$, and $a^{k+1}=a\cdot a^k=a\cdot c_i\cdot p_i=0$, a contradiction.

We show that $b_n\nmid c_{m+n}$. If this is not true, then $b_n\mid c_{m+n}$, $c_{m+n}=b_nd_3$, $a_mb_n=ac_{m+n}=a_md_1b_nd_3=a_mb_nd_1d_3$, $a_mb_n(1-d_1d_3)=0$. Since $d_1$ is not a unit, then $1-d_1d_3$ is a unit, and $a_mb_n=0$, a contradiction. Thus $b_n\nmid c_{m+n}$, $c_{m+n}\mid b_n$, and we assume that $b_n=c_{m+n}\cdot d$, where $d\in V$ is not a unit.

Now, we show that there exists $u_1(x)\in V(x)$ such that $h_1(x)=q(x)-u_1(x)\cdot g(x)$ satisfying that $\deg h_1(x)=m+n$, and the leading coefficient of $h_1(x)$ is $c_{m+n}\cdot\varepsilon$, where $\varepsilon\in V$ is a unit. Because $b_n\mid a$, $a^k\mid c_i$ for any positive integer $k$, and $i=m+n+1,\cdots,s$, then $b^k_n\mid c_s$, and there exists $e_s\in V$ such that $c_s=b_n\cdot e_s$, $e_s$ is not a unit. Similarly, for any $k$, $a^k\mid e_s$ since otherwise $a^k=e_s\cdot d_4$, and
\begin{equation*}
a^{k+2}=a\cdot a\cdot a^k=a\cdot b_n\cdot d_2\cdot e_s\cdot d_4=a\cdot (b_n\cdot e_s)\cdot d_2\cdot d_4=a\cdot c_s\cdot d_2\cdot d_4=0
\end{equation*}
so $a$ is nilpotent, a contradiction.

Let $\overline{h_1}(x)=q(x)-e_sx^{s-n}\cdot g(x)$. Then $\deg\overline{h_1}(x)\leq s-1$, and the coefficient $c'_{s-i}$ of $x^{s-i}$ in $\overline{h_1}(x)$ satisfies that $a^k\mid c'_{s-i}$ (since $c'_{s-i}=c_{s-i}-e_s\cdot b_{n-i}$, and $a^k\mid c_{s-i},e_s$), where $i=1,2,\cdots,s-m-n-1$.
\begin{equation*}
c'_{m+n}=c_{m+n}-e_s\cdot b_{m+2n-s}=c_{m+n}(1- b_{m+2n-s}\cdot e_s/c_{m+n}).
\end{equation*}
Since $c_{m+n}\mid b_n$, $b_n\mid a$, and $a^k\mid e_s$, then $c^k_{m+n}\mid e_s$. So $e_s/c_{m+n}$ is not a unit, and ($1- b_{m+2n-s}\cdot e_s/c_{m+n}$) is a unit. Since $b_n\mid a, a^k\mid c'_{k-1}$, then $b^k_n\mid c'_{k-1}$, by repeating the process above, we obtain that
\begin{equation*}
h_1(x)\triangleq q(x)-u_1(x)g(x)=c_{m+n}\cdot\varepsilon \cdot x^{m+n}+\cdots+c_0,
\end{equation*}
where $\varepsilon$ is a unit and $c_0\in V$.

Let $h_2(x)\triangleq h_1(x)-c_0b_0^{-1}g(x)$, then
\begin{equation*}
h_2(x)=x(c_{m+n}\cdot\varepsilon x^{m+n-1}+\cdots+c^{(1)}_0).
\end{equation*}
Let
\begin{equation*}
h_3(x)\triangleq h_2(x)-c^{'}_0b^{-1}_0\cdot x\cdot g(x)=h_1(x)-(c_0b^{-1}_0+c^{(1)}_0b^{-1}_0x)g(x),
\end{equation*}
so
\begin{equation*}
h_3(x)=x^{2}(c_{m+n}\cdot \varepsilon x^{m+n-2}+\cdots+c^{(2)}_0).
\end{equation*}
By repeating the process above $m+1$ times, we obtain that
\begin{equation*}
h(x) = h_1(x) - u_2(x)g(x) = x^{m+1}((c_{m+n} \cdot \varepsilon + b_n \cdot q) x^{n-1} + \cdots+c^{(m+1)}_0),
\end{equation*}
where $q=-c^{(m)}_0b^{-1}_0,c^{(1)}_0,c^{(2)}_0,\cdots,c^{(m+1)}_0\in V$. Since
\begin{equation*}
c_{m+n}\varepsilon+b_nq=c_{m+n}\varepsilon+c_{m+n}dq=c_{m+n}(\varepsilon+dq),
\end{equation*}
where $\varepsilon$ is a unit, and $d$ is not a unit. Then $\overline{\varepsilon} \triangleq \varepsilon + d \cdot q$ is a unit. Let $u(x)=u_1(x)+u_2(x)$, $q_1(x)=c_{m+n}\cdot\overline{\varepsilon}x^{n-1}+\cdots+c^{(1)}_0)$.  Then $h(x)=x^{m+1}q_1(x)$, and
\begin{equation*}
h(x) = h_1(x) -u_2(x)g(x)=q(x)-(u_1(x)+u_2(x))g(x)=q(x)-u(x)g(x).
\end{equation*}

Define
\begin{align*}
p_1(x) & \triangleq p(x)-au(x),\\
g_1(x) & \triangleq g(x)-d\cdot \bar{\varepsilon}^{-1}\cdot x\cdot q_1(x),\\
f_1(x) & \triangleq ax^{m+1}-d\cdot \bar{\varepsilon}^{-1}\cdot x\cdot p_1(x).
\end{align*}
Since $b_n=c_{m+n}\cdot d=d\cdot \overline{\varepsilon}^{-1}\cdot c_{m+n}\cdot \overline{\varepsilon}$, we know that  $\deg g_1(x)\leq n-1$ and $g_1(0)=g(0)$ is a unit. Applying elementary transformations and  Lemma 4.3, 4.5,  we have
$$
\begin{aligned}
M&=\left(
      \begin{array}{ccc}
        a & g(x) & 0 \\
        p(x) & q(x) & 0 \\
        0 & 0 & 1 \\
      \end{array}
    \right)\equiv\left(
                   \begin{array}{ccc}
                     a & g(x) & 0 \\
                     p_1(x) & h(x) & 0 \\
                     0 & 0 & 1 \\
                   \end{array}
                 \right)\equiv\left(
                                \begin{array}{ccc}
                                  a & g(x) & 0 \\
                                  p_1(x) & x^{m+1}q_1(x) & 0 \\
                                  0 & 0 & 1 \\
                                \end{array}
                              \right)\\
                              &\equiv\left(
                                             \begin{array}{ccc}
                                               aq_1(x) & g(x) & 0 \\
                                               p_1(x) & x^{m+1} & 0 \\
                                               0 & 0 & 1 \\
                                             \end{array}
                                           \right)\cdot\left(
                                                         \begin{array}{ccc}
                                                           ax^{m+1} & g(x) & 0 \\
                                                           p_1(x) & q_1(x) & 0 \\
                                                           0 & 0 & 1 \\
                                                         \end{array}
                                                       \right)\equiv\left(
                                                         \begin{array}{ccc}
                                                           ax^{m+1} & g(x) & 0 \\
                                                           p_1(x) & q_1(x) & 0 \\
                                                           0 & 0 & 1 \\
                                                         \end{array}
                                                       \right)\\&\equiv\left(
                                                                      \begin{array}{ccc}
                                                                        f_1(x) & g_1(x) & 0 \\
                                                                        p_1(x) & q_1(x) & 0 \\
                                                                        0 & 0 & 1 \\
                                                                      \end{array}
                                                                    \right)\triangleq M_1 \mod E_3(V[x])
                                                                    \end{aligned}.$$
Since $f_1(x)=ax^{m+1}-d\cdot\overline{\varepsilon}^{-1}\cdot x\cdot p_1(x)$, then $f_1(0)=0$, and $p_1(0)$ must be a unit. Thus $\deg M_1(m)\leq \deg g_1(x) \leq n-1$, $M_1\in \SL_3(V[x])_{n-1}$, and hence $M\in \SL_3(V[x])_{n-1}$.
\end{proof}

 \begin{theorem}
 Let $V$ be a valuation ring, and
 $$M=\left(
       \begin{array}{ccc}
         f(x) & g(x) & 0 \\
         p(x) & q(x) & 0 \\
         0 & 0 & 1 \\
       \end{array}
     \right)\in \SL_3(V[x])
 $$
 Then $M\in E_3(V[x])$.
  \end{theorem}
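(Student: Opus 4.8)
The plan is to prove, by induction on $n$, that $\SL_3(V[x])_n = E_3(V[x])$; since the matrix $M$ in the statement lies in $\SL_3(V[x])_n$ for $n=\deg M(m)$ (take the trivial one-term product), this yields the theorem. The base case $n=0$ is the equality $\SL_3(V[x])_0 = E_3(V[x])$ recorded above. For the inductive step, assume every matrix of the given special form whose minimal entry has degree $\le n-1$ lies in $E_3(V[x])$; then $\SL_3(V[x])_{n-1} = E_3(V[x])$, because $\SL_3(V[x])_{n-1}$ is a multiplicative semigroup, contains $E_3(V[x])$, and is generated modulo $E_3(V[x])$ by matrices each of which, by the induction hypothesis, already lies in the group $E_3(V[x])$. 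Hence it suffices to prove that an arbitrary $M$ of the special form with $\deg M(m)=n$ lies in $\SL_3(V[x])_{n-1}$. By Lemma 4.1 we may take $M(m)=g(x)$, and since $V$ is local and $f(0)q(0)-g(0)p(0)=1$ we may assume that $g(0)$ and $p(0)$ are units; note $\deg g=n$.

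Next I would bring the $(1,1)$-entry into one of the two shapes covered by Lemmas 4.6 and 4.7. By Lemma 3.5 a column operation (subtracting a suitable multiple of the second column from the first) replaces $f(x)$ by $x^{s}h(x)$ with $\deg h<n$, for any prescribed large $s$, and a row operation likewise replaces $q(x)$ by $x^{s}q_1(x)$ with $\deg q_1<n$; these moves fix the $E_3(V[x])$-coset of $M$, hence keep it in $\SL_3(V[x])_n$, and leave $g$ (so $n$) untouched. If the resulting $(2,1)$-entry has degree $<n$, then the minimal entry of the new matrix has degree $\le n-1$ and we are done; otherwise the minimal entry is still $g$. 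Because $V$ is a valuation ring, $h$ has a content $a\in V$: write $h=a\tilde h$ with $\tilde h$ primitive, so the $(1,1)$-entry is $a\,x^{s}\tilde h(x)$. When $\deg\tilde h=0$, $\tilde h$ is a unit, so after a Whitehead move $\mathrm{diag}(u,u^{-1})\in E_2(V[x])$ the entry becomes $b\,x^{s}$ with $b\in V$, and peeling off the $x^{s}$ by splitting the $(2,2)$-entry via Lemma 4.3 exactly as in the proof of Lemma 4.6 (the second factor produced has a monic entry, hence is killed by Lemma 4.5) reduces the $(1,1)$-entry to a constant, so Lemma 4.7 gives $M\in\SL_3(V[x])_{n-1}$. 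When $\deg\tilde h\ge 1$ and $a$ is a unit, the entry $a\,x^{s}\tilde h$ is primitive, and after the same splitting manipulation (taking $s$ as small as Lemma 3.5 allows) it has degree $<n$, so Lemma 4.6 applies. When $\deg\tilde h\ge 1$ and $a$ is not a unit, I would carry out the leading-coefficient comparison used in the proof of Lemma 4.6 / Theorem 3.1: compare the leading coefficient of $\tilde h$ with the leading coefficient $b_n$ of $g$; if the former divides $b_n$ one lowers $\deg g$ directly, and if $b_n$ divides the former one forms a combination $\tilde h-(\text{non-unit scalar})\cdot g$ that moves the position of the unit coefficient up by one, iterating until the relevant entry becomes unitary and Lemma 4.5 finishes. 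In every case $M$ is congruent modulo $E_3(V[x])$ to a product of matrices whose minimal entries have degree $\le n-1$, i.e. $M\in\SL_3(V[x])_{n-1}$, and the induction closes.

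The step I expect to be the real obstacle is the last one: when the content $a$ of the reduced $(1,1)$-entry is a non-unit of $V$ multiplied by a power of $x$, one cannot simply cancel it in the way Lemma 3.4 cancels a common factor from a unimodular row, since here only the single entry — not an entire row — carries the factor $a$. Making this work requires interleaving the splitting Lemma 4.3 (which trades a factor $x^{s}$ on one entry for powers of $x$ on two entries, one of which becomes monic and is removed by Lemma 4.5) with the coefficient-shuffling argument, and checking that the degree of the minimal entry never climbs back to $n$ before the procedure terminates with a unitary entry — this is the $2\times 2$-block analogue of the delicate bookkeeping in the proof of Theorem 3.1.
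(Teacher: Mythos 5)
Your overall induction frame (reduce to showing that a matrix whose minimal entry has degree $n$ lies in $\SL_3(V[x])_{n-1}$, then use the semigroup property and $\SL_3(V[x])_0=E_3(V[x])$) matches the paper, and your treatment of the easy cases ($\deg\tilde h=0$, or content $a$ a unit) is fine. But in the generic case --- $(1,1)$-entry $a\,x^{s}\tilde h(x)$ with $a$ a non-unit and $\deg\tilde h\ge 1$ --- your argument has a real gap, and you essentially admit it. Neither Lemma 4.6 nor Lemma 4.7 applies to that entry as it stands: Lemma 4.6 needs the entry to be \emph{primitive} (it is not, since every coefficient is divisible by the non-unit $a$), and Lemma 4.7 needs it to be a constant. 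Your proposed fix, ``compare the leading coefficient of $\tilde h$ with $b_n$ and shuffle the unit coefficient upward,'' does not get off the ground: the column operations available to you modify the actual entry $a\,x^{s}\tilde h$, not the primitive factor $\tilde h$ by itself, and the entry $a\,x^{s}\tilde h$ has \emph{no} unit coefficient at all when $a$ is a non-unit, so the bookkeeping of Theorem 3.1 / Lemma 4.6 cannot be imitated directly. Cancelling $a$ as in Lemma 3.4 is also unavailable, as you note, since only one entry of the $2\times 2$ block carries the factor $a$.

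The idea you are missing is the one the paper uses at exactly this point: the multiplicative splitting Lemmas 4.2--4.3. Writing the entry as the product $a\cdot x^{s}\cdot f_2(x)$, Lemma 4.2 lets you write $M$, modulo $E_3(V[x])$, as a \emph{product} of matrices in which the factors $a$, $x^{s}$ and $f_2(x)$ each occupy the $(1,1)$-position of a separate matrix (with suitably adjusted $(2,2)$-entries); the factor carrying the monic entry $x^{s}$ is killed outright by Lemma 4.5, the factor with constant entry $a$ is handled by Lemma 4.7 (which is where all the hard valuation-ring analysis of a non-unit, non-nilpotent $a$ lives), and the factor with the primitive entry $f_2(x)$ of degree $<n$ is handled by Lemma 4.6. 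The semigroup property of $\SL_3(V[x])_{n-1}$ then absorbs the product, and the induction closes. Without this decoupling of the content from the primitive part into different matrix factors, the ``interleaving'' you describe is not a proof but a restatement of the difficulty.
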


\begin{proof}
We assume that $M(m)=g(x)$ with $M \in \SL_3(V[x])_n$. By Lemma 3.5, for any $s > \deg f(x) - \deg g(x)+1$, there exists $f_1(x)$ such that $f(x)=x^sf_1(x) \mod g(x)V[x]$, and $\deg f_1(x) < \deg g(x)=n$. Furthermore, let $f_1(x)=af_2(x)$, where $a\in V$, $f_2(x)$ is primitive, and $\deg f_2(x)\leq n-1$. By Lemma 4.2, 4.5, we have
$$\begin{aligned}
M&\equiv\left(
           \begin{array}{ccc}
             ax^sf_2(x) & g(x) & 0 \\
             p_1(x) & q(x) & 0 \\
             0 & 0 & 1 \\
           \end{array}
         \right)\equiv\left(
           \begin{array}{ccc}
             ax^s & g(x) & 0 \\
             p_1(x) & f_2(x)q(x) & 0 \\
             0 & 0 & 1 \\
           \end{array}
         \right)\left(
           \begin{array}{ccc}
             f_2(x) & g(x) & 0 \\
             p_1(x) & ax^sq(x) & 0 \\
             0 & 0 & 1 \\
           \end{array}
         \right)\\&\equiv\left(
           \begin{array}{ccc}
             a & g(x) & 0 \\
             p_1(x) & x^sf_2(x)q(x) & 0 \\
             0 & 0 & 1 \\
           \end{array}
         \right)\left(
           \begin{array}{ccc}
             x^s & g(x) & 0 \\
             p_1(x) & af_2(x)q(x) & 0 \\
             0 & 0 & 1 \\
           \end{array}
         \right)\left(
           \begin{array}{ccc}
             f_2(x) & g(x) & 0 \\
             p_1(x) & ax^sq(x) & 0 \\
             0 & 0 & 1 \\
           \end{array}
         \right)\\&\equiv\left(
           \begin{array}{ccc}
             a & g(x) & 0 \\
             p_1(x) & x^sf_2(x)q(x) & 0 \\
             0 & 0 & 1 \\
           \end{array}
         \right)\left(
           \begin{array}{ccc}
             f_2(x) & g(x) & 0 \\
             p_1(x) & ax^sq(x) & 0 \\
             0 & 0 & 1 \\
           \end{array}
         \right) \mod E_3(V[x]))
\end{aligned}$$

Let $$M_1=\left(
           \begin{array}{ccc}
             a & g(x) & 0 \\
             p_1(x) & x^sf_2(x)q(x) & 0 \\
             0 & 0 & 1 \\
           \end{array}
         \right),\quad M_2=\left(
           \begin{array}{ccc}
             f_2(x) & g(x) & 0 \\
             p_1(x) & ax^sq(x) & 0 \\
             0 & 0 & 1 \\
           \end{array}
         \right)
$$
Since $a\in V$, $f_2(x)$ is primitive and $\deg f_2(x) \leq n-1$, by Lemmas 4.6 and 4.7, $M_1,M_2\in \SL_3(V[x])_{n-1}$, then $M\in \SL_3(V[x])_{n-1}$. By repeating the  procedure above, we have that $M\in \SL_3(V[x])_{n-2},\cdots,\SL_3(V[x])_0=E_3(V[x])$.
\end{proof}

\begin{theorem}
Let $V$ ba a valuation ring. Then $\SL_3(V[x])=E_3(V[x])$.
\end{theorem}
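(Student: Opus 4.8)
The plan is to deduce $\SL_3(V[x])=E_3(V[x])$ as a short formal consequence of the two results already established: Theorem~3.1, that every length-$3$ unimodular row over $V[x]$ is $E_3(V[x])$-equivalent to $(1,0,0)$, and Theorem~4.1, which handles every matrix of $\SL_3(V[x])$ whose third row and third column equal $(0,0,1)$. Given an arbitrary $\sigma(x)\in\SL_3(V[x])$, I would perform two successive elementary reductions that bring $\sigma(x)$ into the block shape treated in Theorem~4.1, while recording that each reduction is effected by an element of $E_3(V[x])$.

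First I would normalise the third column $v$ of $\sigma(x)$. The column $v$ is unimodular (being a column of the invertible matrix $\sigma(x)$), so $v^{T}\in Um_3(V[x])$. Transposition sends the generator $I_3+a{\bf e}_{ij}$ to $I_3+a{\bf e}_{ji}$ and therefore carries $E_3(V[x])$ onto itself, so Theorem~3.1 applied to $v^{T}$ yields $E\in E_3(V[x])$ with $v^{T}E=(1,0,0)$, that is $E^{T}v=(1,0,0)^{T}$. Since the explicit elementary matrix $(I_3-{\bf e}_{13})(I_3+{\bf e}_{31})\in E_3(V)$ sends $(1,0,0)^{T}$ to $(0,0,1)^{T}$, putting $E_1:=(I_3-{\bf e}_{13})(I_3+{\bf e}_{31})E^{T}\in E_3(V[x])$ gives $E_1 v=(0,0,1)^{T}$, so that $E_1\sigma(x)$ has third column $(0,0,1)^{T}$.

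Next, writing $E_1\sigma(x)=(a_{ij})$ with $a_{i3}=\delta_{i3}$, I would clear the entries $a_{31},a_{32}$ of the third row by right multiplication by $E_2:=(I_3-a_{31}{\bf e}_{31})(I_3-a_{32}{\bf e}_{32})\in E_3(V[x])$; since the third column of $E_1\sigma(x)$ equals $(0,0,1)^{T}$, this operation subtracts only multiples of that column and leaves it fixed. The outcome is
\[
E_1\,\sigma(x)\,E_2 \;=\; \begin{pmatrix} f(x) & g(x) & 0\\ p(x) & q(x) & 0\\ 0 & 0 & 1\end{pmatrix} \;\in\; \SL_3(V[x]),
\]
which is exactly the form treated in Theorem~4.1, so $E_1\sigma(x)E_2\in E_3(V[x])$. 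Hence $\sigma(x)=E_1^{-1}\bigl(E_1\sigma(x)E_2\bigr)E_2^{-1}\in E_3(V[x])$; as the inclusion $E_3(V[x])\subseteq\SL_3(V[x])$ is trivial, this proves $\SL_3(V[x])=E_3(V[x])$.

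I do not anticipate a genuine obstacle: essentially all the content has been absorbed into Theorems~3.1 and~4.1, and what remains is formal bookkeeping. The one point needing a little care is the passage from Theorem~3.1 as stated (rows reduced to $(1,0,0)$) to the version used here (a column reduced to $(0,0,1)^{T}$), which is handled by the transpose observation together with the explicit permutation-type elementary matrix; after that the argument is just two rounds of clearing entries by elementary operations.
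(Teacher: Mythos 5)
Your argument is correct and follows essentially the same route as the paper: use Theorem~3.1 to bring one line of $\sigma(x)$ to a standard unit vector by elementary operations, clear the complementary entries, and invoke Theorem~4.1 on the resulting block matrix. The only cosmetic difference is that you normalise the third column (hence need the transpose-invariance of $E_3(V[x])$), whereas the paper applies Theorem~3.1 directly to the third row; both reductions are equivalent bookkeeping.
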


\begin{proof}
Firstly, it is obvious that $E_3(V[x])\subseteq \SL_3(V[x])$. For any $A(x)\in \SL_3(V[x])$, we set
 $$A(x)=\left(
          \begin{array}{ccc}
            a_{11}(x) & a_{12}(x) & a_{13}(x) \\
            a_{21}(x) & a_{22}(x) & a_{23}(x) \\
            a_{31}(x) & a_{32}(x) & a_{33}(x) \\
          \end{array}
        \right)
 $$
 Then $\alpha=(a_{31}(x),a_{32}(x),a_{33}(x))$ is a unimodular row in $V^{1\times3}[x]$. By Theorem 3.1, there exists $E\in E_3(V[x])$ such that $\alpha\cdot E=(0,0,1)$, so
 $$A(x)\equiv\left(
               \begin{array}{ccc}
                 f(x) & g(x) & 0 \\
                 p(x) & q(x) & 0 \\
                 0 & 0 & 1 \\
               \end{array}
             \right) \mod E_3(V[x])
 $$
 By Theorem 4.1, $A(x)\in E_3(V[x])$. Thus $\SL_3(V[x])=E_3(V[x])$.
\end{proof}

\begin{theorem}
Let $V$ be a valuation ring. Then $\SL_n(V[x])=E_n(V[x])$ for $n\geq3$. Furthermore, every matrix $\sigma(x) \in \SL_n(V[x])$ is congruent to $\sigma(0)$ modulo $E_n(V[x])$.
\end{theorem}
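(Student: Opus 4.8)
The second assertion is a formal consequence of the first: once $\SL_n(V[x])=E_n(V[x])$ is known, then $\sigma(0)\in\SL_n(V)\subseteq\SL_n(V[x])=E_n(V[x])$, so $\sigma(x)\sigma(0)^{-1}\in E_n(V[x])$ and $\sigma(x)$ is congruent to $\sigma(0)$ modulo $E_n(V[x])$. Hence the whole problem is to establish $\SL_n(V[x])=E_n(V[x])$ for all $n\geq3$.

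I would first single out the statement
$$(\star)\qquad \text{every }v\in Um_n(V[x])\text{ satisfies }v\sim_{E_n(V[x])}(1,0,\dots,0),\qquad n\geq3,$$
and then deduce the theorem from $(\star)$ by induction on $n$. For $n=3$ this is Theorem 4.3. For $n\geq4$, take $A\in\SL_n(V[x])$; its last column is a unimodular row, so by $(\star)$ (applied to columns, with $E_n$ acting on the left) there is $\varepsilon\in E_n(V[x])$ with $\varepsilon A=\left(\begin{smallmatrix}B&0\\ w&1\end{smallmatrix}\right)$, $B\in\SL_{n-1}(V[x])$. Clearing the row $w$ by elementary column operations gives $\left(\begin{smallmatrix}B&0\\ 0&1\end{smallmatrix}\right)$, and since $B\in\SL_{n-1}(V[x])=E_{n-1}(V[x])$ by the inductive hypothesis, we get $A\in E_n(V[x])$. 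Note that Theorem 3.2 already gives that $V[x]$ is Hermite, hence that every unimodular row is completable; but for this descent one needs the completion to lie in $E_n(V[x])$, which is exactly the content of $(\star)$ and is not supplied by Hermiteness alone.

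So everything reduces to $(\star)$. The case $n=3$ is Theorem 3.1. For $n\geq4$ the natural attempt is a d\'evissage lowering the length: given $(f_1,\dots,f_n)\in Um_n(V[x])$, use that $V$ is local to normalise $(f_1(0),\dots,f_n(0))$ to $(1,0,\dots,0)$ by an element of $E_n(V)\subseteq E_n(V[x])$, so that $f_1(0)$ is a unit and $f_i(0)=0$ for $i\geq2$; then pass to $\overline{V}=V[x]/f_nV[x]$, apply the length-$(n-1)$ case of $(\star)$ to $(\bar f_1,\dots,\bar f_{n-1})\in Um_{n-1}(\overline V)$, lift the reducing matrix, clear the coordinates $f_2,\dots,f_{n-1}$ against $f_n$ by elementary moves, and finish. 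The obstacle, and this is the crux of the whole theorem, is that $\overline{V}=V[x]/f_nV[x]$ is in general neither local nor a valuation ring, so neither Theorem 3.1 nor the local-ring Lemma 3.6 applies to it verbatim, and the d\'evissage stalls. I would therefore instead prove $(\star)$ for $n\geq4$ directly over $V[x]$, by generalising the degree-induction of Theorem 3.1 (and, where the $\SL_n$ step needs it, the block-matrix manipulations of Lemmas 4.2--4.7 and Theorems 4.1--4.3) from rows of length $3$ and matrices of size $3$ to length $n$ and size $n$: the valuation-theoretic inputs (the dichotomy $a\mid b$ or $b\mid a$, the factorisation $h=af_1$ with $f_1$ primitive, and the passage to a coordinate of unit leading coefficient followed by Lemma 3.6) all have evident higher-rank analogues, but writing out the degree/length bookkeeping for $n$ coordinates is the main technical effort. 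Once $(\star)$ is in hand, the block-triangularisation and the descent on $n$ above are routine.
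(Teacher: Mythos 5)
Your handling of the second assertion is exactly the paper's argument ($\beta(x)=\sigma(0)^{-1}\sigma(x)\in\SL_n(V[x])=E_n(V[x])$), and your overall route for the first assertion is the one the paper intends but does not write out: the paper's entire proof of $\SL_n(V[x])=E_n(V[x])$ for $n\ge4$ is the phrase ``by Theorem 4.2 we easily obtain,'' whereas you make explicit what this reduction actually needs, namely transitivity of $E_n(V[x])$ on $Um_n(V[x])$ for all $n\ge3$ (your $(\star)$), followed by the routine peeling-off of the last column and row down to the $3\times3$ case. You are also right that $(\star)$ for $n\ge4$ is not a formal consequence of the $n=3$ case: $V[x]/(f_n)$ is neither local nor a valuation ring, and no stable-range/dimension bound is available since $V$ may have infinite Krull dimension, so this is a real point that the paper glosses over. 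The one step you leave unexecuted, $(\star)$ for $n\ge4$, is indeed the only missing ingredient, and your plan for it is sound and genuinely routine rather than a major effort: Lemmas 3.1, 3.2, 3.5 and 3.6 are already stated for arbitrary $n$, Lemmas 3.3 and 3.4 extend verbatim, and in the degree induction of Theorem 3.1 the coordinates other than one distinguished primitive coordinate and $e(x)$ are inert (reduce all $f_i$ mod $e$ to $x^s h_i$ by Lemma 3.5, take the content $a$ that divides all the others using the valuation order, strip it from two coordinates via Lemma 3.2 since $a$ is a unit mod $(e)$, and from then on only the primitive coordinate and $e$ are manipulated, ending with Lemma 3.6). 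Two small corrections: the base case of your descent on $n$ is Theorem 4.2 (the $3\times3$ equality), not Theorem 4.3; and you do not need size-$n$ analogues of Lemmas 4.2--4.7 or Theorem 4.1 at all, because once the last column is reduced to $(0,\dots,0,1)^{T}$ and the last row cleared, the induction terminates at the $3\times3$ case where Theorem 4.2 applies as is. So your proposal is correct in substance and, if anything, more honest than the paper's own one-line proof about where the work lies.
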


\begin{proof}
By Theorem 4.2, we easily obtain that $\SL_n(V[x])=E_n(V[x])$ for $n\geq 3$. For an arbitrary $\sigma(x) \in \SL_n(V[x])$, we have that $\beta(x) = \sigma(0)^{-1} \cdot \sigma(x) \in \SL_n(V[x])=E_n(V[x])$, and $\sigma(x)= \sigma(0)\cdot \beta(x)$. Thus $\sigma(x)$ is congruent to $\sigma(0)$ modulo $E_n(V[x])$.
\end{proof}

\begin{lemma} $([5]\mathrm{GL}_n$-Patching Theorem $)$ Let $n \ge 3$ and $\sigma(x)\in \SL_n(R[x],(x))$. If $\sigma_m(x) \in E_n(R_m[x])$ for every maximal ideal $m$ of $R$, then $\sigma(x) \in E_n(R[x])$.
\end{lemma}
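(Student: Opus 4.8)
The plan is to prove this exactly as Suslin's local–global principle for the elementary subgroup (the ``Quillen patching'' argument for elementary matrices): via Quillen's \emph{ideal of denominators} combined with Vaserstein's dilation lemma. Recall that $\sigma(x)\in\SL_n(R[x],(x))$ means precisely $\sigma(0)=I_n$. Introduce a fresh indeterminate $Y$, work over $R[x,Y]$, and set
$$\mathfrak{q} \;=\; \bigl\{\, a\in R \;:\; \sigma(Y+ax)\,\sigma(Y)^{-1}\in E_n\bigl(R[x,Y]\bigr) \,\bigr\},$$
where $\sigma(Y+ax)$ and $\sigma(Y)$ are the matrices obtained from $\sigma(x)$ by the substitutions $x\mapsto Y+ax$ and $x\mapsto Y$. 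At $x=0$ this matrix specializes to $\sigma(Y)\sigma(Y)^{-1}=I_n$, so $0\in\mathfrak{q}$ trivially. The goal is to show $\mathfrak{q}=R$; once that is done, $1\in\mathfrak{q}$ gives $\sigma(Y+x)\,\sigma(Y)^{-1}\in E_n(R[x,Y])$, and specializing $Y\mapsto 0$ together with $\sigma(0)=I_n$ yields $\sigma(x)\in E_n(R[x])$, as wanted.

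First I would check that $\mathfrak{q}$ is an ideal. Stability under multiplication by $R$ is immediate: for $a\in\mathfrak{q}$ and $c\in R$, the matrix $\sigma(Y+(ca)x)\,\sigma(Y)^{-1}$ is the image of $\sigma(Y+ax)\,\sigma(Y)^{-1}$ under the ring endomorphism $x\mapsto cx$ of $R[x,Y]$, and ring homomorphisms carry $E_n$ into $E_n$. For additivity, let $a,a'\in\mathfrak{q}$ and use the cocycle identity
$$\sigma\bigl(Y+(a+a')x\bigr)\,\sigma(Y)^{-1} \;=\; \Bigl[\sigma\bigl((Y+a'x)+ax\bigr)\,\sigma(Y+a'x)^{-1}\Bigr]\cdot\Bigl[\sigma(Y+a'x)\,\sigma(Y)^{-1}\Bigr].$$
The right bracket lies in $E_n(R[x,Y])$ since $a'\in\mathfrak{q}$, and the left bracket is the image of $\sigma(Y+ax)\,\sigma(Y)^{-1}\in E_n(R[x,Y])$ under the substitution $Y\mapsto Y+a'x$, hence also lies in $E_n(R[x,Y])$; so $a+a'\in\mathfrak{q}$.

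The crux is to show $\mathfrak{q}\not\subseteq\mathfrak{m}$ for every maximal ideal $\mathfrak{m}$, which forces $\mathfrak{q}=R$. Fix $\mathfrak{m}$. By hypothesis $\sigma_\mathfrak{m}(x)$ is a finite product of elementary matrices over $R_\mathfrak{m}[x]$; since only finitely many denominators occur, this product already makes sense over $R_s[x]$ for some $s\in R\setminus\mathfrak{m}$ (where $R_s=R[s^{-1}]$), so the image of $\sigma(x)$ in $\SL_n(R_s[x])$ lies in $E_n(R_s[x])$ and still has constant term $I_n$. Now Vaserstein's dilation lemma applies: there is an integer $\ell\geq 1$ such that the dilated cocycle $\sigma(Y+s^{\ell}x)\,\sigma(Y)^{-1}$ — which over $R_s[x,Y]$ is visibly a product of elementary matrices — actually lifts to $E_n(R[x,Y])$, the dilation by a sufficiently high power of $s$ simultaneously clearing the denominators in the elementary factors and absorbing the $s$-torsion in the kernel of $R[x,Y]\to R_s[x,Y]$. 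Hence $s^{\ell}\in\mathfrak{q}$, and since $s\notin\mathfrak{m}$ we have $s^{\ell}\notin\mathfrak{m}$, so $\mathfrak{q}\not\subseteq\mathfrak{m}$. Thus $\mathfrak{q}=R$, and the reduction above finishes the proof.

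I expect the only genuinely hard input to be Vaserstein's dilation lemma used in the third step: extracting, from an elementary decomposition over a localization $R_s[x]$, an honest elementary matrix over $R[x,Y]$ that lifts the dilated cocycle, which requires controlling the denominators, the polynomial degrees, and the $s$-torsion simultaneously so that one fixed exponent $\ell$ suffices. Everything else — the ideal property of $\mathfrak{q}$, the passage from $\mathfrak{m}$ to a basic open $\mathrm{Spec}(R_s)$, and the final specialization $Y\mapsto 0$ — is formal, and in practice one simply cites the dilation lemma as a known result rather than reproving it.
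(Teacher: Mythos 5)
The paper gives no proof of this lemma at all --- it is quoted as a known result from reference [5] --- so there is no internal argument to compare with; your outline is essentially the standard proof found in that reference (Suslin's local--global principle for $E_n$): the Quillen-style ideal $\mathfrak{q}=\{a\in R: \sigma(Y+ax)\sigma(Y)^{-1}\in E_n(R[x,Y])\}$, its ideal property via the cocycle identity and substitutions, and Vaserstein's dilation lemma to produce $s^{\ell}\in\mathfrak{q}$ with $s\notin\mathfrak{m}$. Your reduction is correct; the only point to be careful about is that the dilation lemma must be cited in its strong form, giving that the matrix $\sigma(Y+s^{\ell}x)\sigma(Y)^{-1}$, which already has entries in $R[x,Y]$, itself lies in $E_n(R[x,Y])$ (not merely that its image over $R_s[x,Y]$ is elementary), which is exactly the issue of denominators and $s$-torsion you flag as the hard input.
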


For any $\sigma(x)\in \SL_n(R[x])$, obviously, $\beta(x)=\sigma(0)^{-1}\cdot \sigma(x)\in \SL_n(R[x],(x))$. Combining Theorem 4.3 and the $\mathrm{GL}_n$-Patching Theorem, we now can give an answer to question (3).

\begin{theorem}
Let $R$ be an arithmetical ring and $n \geq 3$. Then $\SL_n(R[x]) = \SL_n(R) \cdot E_n(R[x])$.
\end{theorem}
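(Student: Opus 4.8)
The plan is to reduce the arithmetical case to the already-established valuation case by localizing at each maximal ideal, and then to glue the local information back together using the $\mathrm{GL}_n$-Patching Theorem (Lemma 4.8). First I would dispose of the easy inclusion: since $\SL_n(R[x])$ is a group containing both the constant matrices $\SL_n(R)$ and the subgroup $E_n(R[x])$, the product set $\SL_n(R)\cdot E_n(R[x])$ is contained in $\SL_n(R[x])$. So the whole content is the reverse inclusion.

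For the reverse inclusion, take an arbitrary $\sigma(x)\in\SL_n(R[x])$ and put $\beta(x)=\sigma(0)^{-1}\cdot\sigma(x)$. Then $\det\beta(x)=1$ and $\beta(0)=I_n$, so $\beta(x)\in\SL_n(R[x],(x))$, exactly the congruence subgroup to which Lemma 4.8 applies. It therefore suffices to prove $\beta(x)\in E_n(R[x])$, for then $\sigma(x)=\sigma(0)\cdot\beta(x)\in\SL_n(R)\cdot E_n(R[x])$.

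To establish $\beta(x)\in E_n(R[x])$, I would verify the hypotheses of Lemma 4.8 at every maximal ideal $m$ of $R$. Localization at $m$ commutes with adjoining the variable $x$, so the image $\beta_m(x)$ of $\beta(x)$ in $\SL_n(R_m[x])$ still satisfies $\beta_m(0)=I_n$. Because $R$ is arithmetical, $R_m$ is a valuation ring, and hence by Theorem 4.3 we have $\SL_n(R_m[x])=E_n(R_m[x])$ for $n\geq3$; in particular $\beta_m(x)\in E_n(R_m[x])$. Since this holds for every maximal ideal $m$, Lemma 4.8 yields $\beta(x)\in E_n(R[x])$, which completes the argument.

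I do not expect a genuine obstacle here: once Theorem 4.3 (the valuation-ring result) and Lemma 4.8 (the patching theorem) are in hand, this is a standard local--global patching argument. The only points deserving a line of justification are (i) that $\beta(x)$ indeed lands in $\SL_n(R[x],(x))$, so that Lemma 4.8 is applicable, and (ii) the compatibility of localization with the polynomial extension, so that reducing modulo $x$ and then localizing agrees with localizing and then reducing modulo $x$; both are immediate.
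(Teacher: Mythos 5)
Your argument is correct and coincides with the paper's own proof: both factor $\sigma(x)=\sigma(0)\cdot\beta(x)$ with $\beta(x)=\sigma(0)^{-1}\sigma(x)\in\SL_n(R[x],(x))$, apply Theorem 4.3 at each localization $R_m$ (a valuation ring since $R$ is arithmetical) to get $\beta_m(x)\in E_n(R_m[x])$, and conclude via the $\mathrm{GL}_n$-Patching Theorem (Lemma 4.8) that $\beta(x)\in E_n(R[x])$. Nothing is missing; your extra remarks on the easy inclusion and the compatibility of localization with reduction modulo $x$ are the only (routine) additions beyond the paper's text.
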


\begin{proof}
Let $\sigma(x) \in \SL_n(R[x])$. Then $\beta(x) = \sigma(0)^{-1}\cdot \sigma(x) \in \SL_n(R[x],(x))$. Since $R_m$ is a valuation ring, by Theorem 4.3, $\beta_m(x)\in E_n(R_m[x])$ for every maximal ideal $m$ of $R$, by the virtue of $\mathrm{GL}_n$-Patching Theorem, $\beta(x) \in E_n(R[x])$. Thus $\sigma(x) = \sigma(0)\cdot \beta(x) \in \SL_n(R) \cdot E_n(R[x])$, and hence $\SL_n(R[x]) = \SL_n(R) \cdot E_n(R[x])$.
\end{proof}

From Theorem 4.4, we can obtain the following result directly.

\begin{theorem}
Let $R$ be an arithmetical ring and $n\ge 3$. Then every matrix $\sigma(x)\in \SL_n(R[x])$ is congruent to $\sigma(0)$ modulo $E_n(R[x])$.
\end{theorem}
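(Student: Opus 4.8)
The plan is to read this off directly from Theorem 4.4 (equivalently, from its proof), since all of the substantive work has already been done in Theorems 3.1 and 4.1--4.3 together with the $\mathrm{GL}_n$-Patching Theorem (Lemma 4.8). Fix $\sigma(x)\in\SL_n(R[x])$ and set $\beta(x)=\sigma(0)^{-1}\cdot\sigma(x)$. Then $\beta(0)=I_n$, so $\beta(x)\in\SL_n(R[x],(x))$, which is precisely the hypothesis needed to run the patching argument.

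Next I would localize at an arbitrary maximal ideal $m$ of $R$. Because $R$ is arithmetical, $R_m$ is a valuation ring, so Theorem 4.3 applies to $R_m$ and gives $\SL_n(R_m[x])=E_n(R_m[x])$; in particular the localized matrix $\beta_m(x)$ lies in $E_n(R_m[x])$. Since this holds for every maximal ideal $m$, Lemma 4.8 yields $\beta(x)\in E_n(R[x])$.

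Finally, $\sigma(x)=\sigma(0)\cdot\beta(x)$ with $\beta(x)=\sigma(0)^{-1}\sigma(x)\in E_n(R[x])$, which is exactly the assertion that $\sigma(x)$ is congruent to $\sigma(0)$ modulo $E_n(R[x])$ (unambiguously, since $E_n(R[x])$ is normal in $\SL_n(R[x])$, so left and right cosets coincide). There is essentially no obstacle here: the theorem is a formal consequence of Theorem 4.4 and the patching lemma, and the only point requiring the slightest care is recording that the coset statement is independent of side because of the normality of $E_n$.
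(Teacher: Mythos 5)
Your proposal is correct and follows exactly the paper's route: the paper derives this theorem directly from Theorem 4.4, whose proof is precisely your argument (set $\beta(x)=\sigma(0)^{-1}\sigma(x)\in \SL_n(R[x],(x))$, apply Theorem 4.3 to the valuation rings $R_m$, and conclude via the $\mathrm{GL}_n$-Patching Theorem that $\beta(x)\in E_n(R[x])$). Your added remark about normality of $E_n(R[x])$ making the coset statement side-independent is a harmless clarification, not a deviation.
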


Valuation rings form a class of important local rings. We conclude that Questions (1), (2) and (3) are completely solved by Theorems 3.2, 4.1,4.3 and 4.5.

\renewcommand\refname{\normalsize \bf References}

\end{document}